\providecommand{\U}[1]{\protect\rule{.1in}{.1in}}
\providecommand{\U}[1]{\protect\rule{.1in}{.1in}}
\newtheorem{theorem}{Theorem}[section]
\newtheorem{lemma}{Lemma}[section]
\renewcommand{\@biblabel}[1]{}
\begin{document}

\begin{center}
{\Large \textbf{Semiparametric tail-index estimation for randomly
right-truncated heavy-tailed data}}\medskip\medskip

{\large Saida Mancer, Abdelhakim Necir}$^{\ast},$ {\large Souad Benchaira}%
\medskip\\[0pt]

{\small \textit{Laboratory of Applied Mathematics, Mohamed Khider University,
Biskra, Algeria}}\medskip\medskip
\end{center}

\noindent\textbf{Abstract}\medskip

\noindent It was shown that when one disposes of a parametric information of
the truncation distribution, the semiparametric estimator of the distribution
function for truncated data \citep[][]{Wang89} is more efficient than the
nonparametric one. On the basis of this estimation method, we derive an
estimator for the tail index of Pareto-type distributions that are randomly
right-truncated and establish its consistency and asymptotic normality. The
finite sample behavior of the proposed estimator is carried out by simulation
study. We point out that, in terms of both bias and root of the mean squared
error, our estimator performs better than those based on nonparametric
estimation methods. An application to a real dataset of induction times of
AIDS diseases is given \ as well.\bigskip\medskip\medskip

\noindent\textbf{Keywords:} Extreme value index; Product-limit estimator;
Semiparametric; Tail-Empirical process; Truncated data. \medskip

\vfill

\vfill

\noindent{\small $^{\text{*}}$Corresponding author:
\texttt{necirabdelhakim@yahoo.fr} \newline\noindent\textit{E-mail
address:}\newline\texttt{mancer.saida731@gmail.com} (S.~Mancer)\newline%
\texttt{benchaira.s@hotmail.fr} (S.~Benchaira)}

\section{\textbf{Introduction\label{sec1}}}

\noindent Let $\left(  \mathbf{X}_{i},\mathbf{Y}_{i}\right)  ,$ $i=1,...,$
$N\geq1$ be a sample from a couple $\left(  \mathbf{X},\mathbf{Y}\right)  $ of
independent positive random variables (rv's) defined over a probability space
$\left(  \Omega,\mathcal{A},\mathbf{P}\right)  ,$ with continuous distribution
functions (df's) $\mathbf{F}$ and $\mathbf{G}$ respectively.$\ $Suppose that
$\mathbf{X}$ is right-truncated by $\mathbf{Y},$ in the sense that
$\mathbf{X}_{i}$ is only observed when $\mathbf{X}_{i}\leq\mathbf{Y}_{i}.$
Thus, let us denote $\left(  X_{i},Y_{i}\right)  ,$ $i=1,...,n$ to be the
observed data, as copies of a couple of dependent rv's $\left(  X,Y\right)  $
corresponding to the truncated sample $\left(  \mathbf{X}_{i},\mathbf{Y}%
_{i}\right)  ,$ $i=1,...,$ $N,$ where $n=n_{N}$ is a random sequence of
discrete rv's. By the weak law of large numbers, we have%
\begin{equation}
n/N\overset{\mathbf{P}}{\rightarrow}p:=\mathbf{P}\left(  \mathbf{X}%
\leq\mathbf{Y}\right)  =\int_{0}^{\infty}\mathbf{F}\left(  w\right)
d\mathbf{G}\left(  w\right)  ,\text{ as }N\rightarrow\infty, \label{p}%
\end{equation}
where the notation $\overset{\mathbf{P}}{\rightarrow}$ stands for the
convergence in probability. The constant $p$ corresponds to the probability of
observed sample which is supposed to be non-null, otherwise nothing is
observed. The truncation phenomena frequently occurs in medical studies, when
one wants to study the length of survival after the start of the disease: if
$\mathbf{Y}$ denotes the elapsed time between the onset of the disease and
death, and if the follow-up period starts $\mathbf{X}$ units of time after the
onset of the disease then, clearly, $\mathbf{X}$ is right-truncated by
$\mathbf{Y}.$ For concrete examples of truncated data in medical treatments
one refers, among others, to \cite{Lagakos88} and \cite{Wang89}. Truncated
data schemes may also occur in many other fields, namely actuarial sciences,
astronomy, demography and epidemiology, see for instance the textbook of
\cite{Lawless}. \medskip

\noindent From \cite{GS2015} the marginal df's $F^{\ast}$ and $G^{\ast}$
corresponding to the joint df of $\left(  X,Y\right)  $ are given by
\[
F^{\ast}\left(  x\right)  :=p^{-1}\int_{0}^{x}\overline{\mathbf{G}}\left(
w\right)  d\mathbf{F}\left(  w\right)  \text{ and }G^{\ast}\left(  x\right)
:=p^{-1}\int_{0}^{x}\mathbf{F}\left(  w\right)  d\mathbf{G}\left(  w\right)
.
\]
By the previous first equation we derive a representation of the underlying df
$\mathbf{F}$ as follows:%
\begin{equation}
\mathbf{F}\left(  x\right)  =p\int_{0}^{x}\frac{dF^{\ast}\left(  w\right)
}{\overline{\mathbf{G}}\left(  w\right)  }, \label{F-G}%
\end{equation}
which will be for a great interest thereafter. In the sequel, we are dealing
with the concept of regular variation. A function $\varphi$ is said to be
regularly varying at infinity with negative index $-1/\eta,$ notation
$\varphi\in\mathcal{RV}\left(  -1/\eta\right)  ,$ if
\begin{equation}
\varphi\left(  st\right)  /\varphi\left(  t\right)  \rightarrow s^{-1/\eta
},\text{ as }t\rightarrow\infty, \label{phi}%
\end{equation}
for $s>0.$ This convergence is known as the first-order condition of regular
variation and its corresponding uniform convergence is formulated in terms of
"Potter's inequalities" as follows: for any small $\epsilon>0,$ there exists
$t_{0}>0$ such that for any $t\geq t_{0}$ and $s\geq1$, we have%
\begin{equation}
\left(  1-\epsilon\right)  s^{-1/\eta-\epsilon}<\varphi\left(  st\right)
/\varphi\left(  t\right)  <\left(  1+\epsilon\right)  s^{-1/\eta+\epsilon}.
\label{pooter}%
\end{equation}
See for instance Proposition B.1.9 (assertion 5, page 367) in \cite{deHF06}.
The second-order condition \citep[see][]{deHS96} expresses the rate of the
convergence $\left(  \ref{phi}\right)  $ above. For any $x>0,$ we have%
\begin{equation}
\dfrac{\varphi\left(  tx\right)  /\varphi\left(  t\right)  -x^{-1/\eta}%
}{A\left(  t\right)  }\rightarrow x^{-1/\eta}\dfrac{x^{\tau/\eta}-1}{\tau\eta
},\text{ as }t\rightarrow\infty, \label{second-order}%
\end{equation}
where $\tau<0$ denotes the second-order parameter and $A$\ is a function
tending to zero and not changing signs near infinity with regularly varying
absolute value with positive index $\tau/\eta.$ A function $\varphi$ that
satisfies assumption $\left(  \ref{second-order}\right)  $ is denoted
$\varphi\in\mathcal{RV}_{2}\left(  -1/\eta;\tau,A\right)  .$ We now have
enough material to tackle the main goal of the paper. To begin, let us assume
that the tails of both df's $\mathbf{F}$ and $\mathbf{G}$ are regularly
varying. That is%
\begin{equation}
\overline{\mathbf{F}}\in\mathcal{RV}\left(  -1/\gamma_{1}\right)  \text{ and
}\overline{\mathbf{G}}\in\mathcal{RV}\left(  -1/\gamma_{2}\right)  ,\text{
with }\gamma_{1},\gamma_{2}>0. \label{rv1}%
\end{equation}
Under this assumption, \cite{GS2015} showed that%
\begin{equation}
\overline{F}^{\ast}\in\mathcal{RV}\left(  -1/\gamma_{1}\right)  \text{ and
}\overline{G}^{\ast}\in\mathcal{RV}\left(  -1/\gamma_{2}\right)  ,
\label{F-G-stars}%
\end{equation}
where%
\begin{equation}
\gamma:=\frac{\gamma_{1}\gamma_{2}}{\gamma_{1}+\gamma_{2}}.
\label{gamma-ratio}%
\end{equation}
For details on the proof of this statement, on refers to \cite{BchMN-16a}
(Lemma A1). The estimation of the tail index $\gamma_{1}$ was recently
addressed for the first time in \cite{GS2015} where the authors used equation
$\left(  \ref{gamma-ratio}\right)  $ to propose an estimator to $\gamma_{1}$
as a ratio of Hill estimators \citep{Hill75} of the tail indices $\gamma$ and
$\gamma_{2}.$ These estimators are based on the top order statistics
$X_{n-k:n}\leq...\leq X_{n:n}$ and $Y_{n-k:n}\leq...\leq Y_{n:n}$ pertaining
to the samples $\left(  X_{1},...,X_{n}\right)  $ and $\left(  Y_{1}%
,...,Y_{n}\right)  $ respectively. The sample fraction $k=k_{n}$ being a
sequence of integers such that, $k_{n}\rightarrow\infty$\textbf{ }and\textbf{
}$k_{n}/n\rightarrow0$\textbf{ }as\textbf{ }$n\rightarrow\infty.$ The
asymptotic normality of the given estimator is established in \cite{BchMN-15}
by considering both the tail dependence and the second-order conditions of
regular variation. By using a Lynden-bell integral, \cite{WW2016} proposed the
following estimator for the tail index $\gamma_{1}:$%
\[
\widehat{\gamma}_{1}^{\left(  \mathbf{W}\right)  }\left(  u\right)  :=\frac
{1}{\overline{\mathbf{F}}_{n}^{\left(  \mathbf{1}\right)  }\left(  u\right)
}\sum_{i=1}^{n}\mathbf{1}\left(  X_{i}>u\right)  \frac{\mathbf{F}_{n}^{\left(
\mathbf{1}\right)  }\left(  X_{i}\right)  }{C_{n}\left(  X_{i}\right)  }%
\log\frac{X_{i}}{u},
\]
where $u>0$ is a given deterministic threshold and%
\[
\mathbf{F}_{n}^{\left(  \mathbf{1}\right)  }\left(  x\right)  :=%
{\displaystyle\prod\limits_{X_{i}>x}}
\left[  1-\frac{1}{nC_{n}\left(  X_{i}\right)  }\right]  ,
\]
and%
\[
C_{n}\left(  x\right)  :=\frac{1}{n}\sum\limits_{i=1}^{n}\mathbf{1}\left(
X_{i}\leq x\leq Y_{i}\right)  ,
\]
is the well-known nonparametric maximum likelihood estimator introduced in the
well-known work \cite{Lynden71}. Independently, \cite{BchMN-16a} used a
Woodroofe-integral with a random threshold, to derive the following estimator
\begin{equation}
\widehat{\gamma}_{1}^{\left(  \mathbf{BMN}\right)  }:=\frac{1}{\overline
{\mathbf{F}}_{n}^{\left(  \mathbf{2}\right)  }\left(  X_{n-k:n}\right)  }%
\sum_{i=1}^{k}\frac{\mathbf{F}_{n}^{\left(  \mathbf{2}\right)  }\left(
X_{n-i+1:n}\right)  }{C_{n}\left(  X_{n-i+1:n}\right)  }\log\frac{X_{n-i+1:n}%
}{X_{n-k:n}}, \label{BMN}%
\end{equation}
where%
\[
\mathbf{F}_{n}^{\left(  \mathbf{2}\right)  }\left(  x\right)  :=\prod
_{X_{i}>x}\exp\left\{  -\dfrac{1}{nC_{n}\left(  X_{i}\right)  }\right\}  ,
\]
is the so-called Woodroofe's nonparametric estimator \citep{W-85} of df
$\mathbf{F.}$ To improve the performance of $\widehat{\gamma}_{1}^{\left(
\mathbf{BML}\right)  },$ \cite{BchMN-16b} and \cite{Haouas19} respectively
proposed a Kernel-smoothed and a reduced-biais versions of this estimator and
establish their consistency and asymptotic normality. It is worth mentioning
that the Lynden-Bell integral estimator $\widehat{\gamma}_{1}^{\left(
\mathbf{W}\right)  }$ with a random threshold $u=X_{n-k:n}$ becomes\
\begin{equation}
\widehat{\gamma}_{1}^{\left(  \mathbf{W}\right)  }:=\frac{1}{\overline
{\mathbf{F}}_{n}^{\left(  \mathbf{1}\right)  }\left(  X_{n-k:n}\right)  }%
\sum_{i=1}^{k}\frac{\mathbf{F}_{n}^{\left(  \mathbf{1}\right)  }\left(
X_{n-i+1:n}\right)  }{C_{n}\left(  X_{n-i+1:n}\right)  }\log\frac{X_{n-i+1:n}%
}{X_{n-k:n}}. \label{WW}%
\end{equation}
In a simulation study, \cite{Haouas18} compared this estimator with
$\widehat{\gamma}_{1}^{\left(  \mathbf{BMN}\right)  }.$ They pointed out that
both estimators have similar behaviors in terms of biases a \ nd mean squared
errors.\medskip

\noindent Recall that the nonparametric Lynden-Bell estimator $\mathbf{F}%
_{n}^{\left(  \mathbf{1}\right)  }$ was constructed on the basis of the fact
that $\mathbf{F}$ and $\mathbf{G}$ are both unknown. In this paper, we are
dealing with the situation when $\mathbf{F}$ is unknown but $\mathbf{G}$ is
parametrized by a known model $\mathbf{G}_{\theta},$ $\theta\in\Theta
\subset\mathbb{R}^{d},$ $d\geq1$ having a density $\mathbf{g}_{\theta}$ with
respect to Lebesgue measure. \cite{Wang89} considered this assumption and
introduced a semiparametric estimator for df $\mathbf{F}$ defined by%
\begin{equation}
\mathbf{F}_{n}\left(  x;\widehat{\theta}_{n}\right)  :=P_{n}\left(
\widehat{\theta}\right)  \frac{1}{n}\sum_{i=1}^{n}\frac{\mathbf{1}\left(
X_{i}\leq x\right)  }{\overline{\mathbf{G}}_{\widehat{\theta}}\left(
X_{i}\right)  }, \label{SCMLE}%
\end{equation}
where $1/P_{n}\left(  \widehat{\theta}\right)  :=n^{-1}\sum_{i=1}%
^{n}1/\overline{\mathbf{G}}_{\widehat{\theta}}\left(  X_{i}\right)  $ and
\begin{equation}
\widehat{\theta}:=\arg\max_{\theta\in\Theta}%
{\displaystyle\prod_{i=1}^{n}}
g_{\theta}\left(  Y_{i}\right)  /\overline{\mathbf{G}}_{\theta}\left(
X_{i}\right)  , \label{CLME}%
\end{equation}
denoting the conditional maximum likelihood estimator (CMLE) of $\theta
,$\ which is consistent and asymptotically normal, see for instance
\cite{Anders70}. On the other hand, \cite{Wang89} showed that $\mathbf{F}%
_{n}\left(  x;\widehat{\theta}_{n}\right)  $\ is a uniformly consistent
estimator over the $x$-axis and established, under suitable regularity
assumptions, its asymptotic normality. Both \cite{Wang89} and
\cite{Moreira2010} pointed out that the semiparametric estimate has greater
efficiency uniformly over the $x$-axis. In the light of a simulation study,
the authors suggest that the semiparametric estimate is a better choice when
parametric information of the truncation distribution is available. Since the
apparition of this estimation method many papers are devoted to the
statistical inference with truncation data, see for instance \cite{BW1996},
\cite{Li97}, \cite{Qin2001}, \cite{shen2010}, \cite{Moreira14}, and
\cite{sh2020}.\medskip

\noindent Motivated by the features of the semiparametric estimation, we next
propose a new an estimator for $\gamma_{1}$ by means of a suitable functional
of $\mathbf{F}_{n}\left(  x;\widehat{\theta}_{n}\right)  .$ We start our
construction by noting that from Theorem 1.2.2 in de \cite{deHF06}, the
first-order condition $\left(  \ref{rv1}\right)  $ (for $\mathbf{F}$) implies
that%
\begin{equation}
\lim_{t\rightarrow\infty}\frac{1}{\overline{\mathbf{F}}\left(  t\right)  }%
\int_{t}^{\infty}\log\left(  x/t\right)  d\mathbf{F}\left(  x\right)
=\gamma_{1}. \label{gamma1}%
\end{equation}
In other words, $\gamma_{1}$ may viewed as a functional $\psi_{t}\left(
\mathbf{F}\right)  ,$ for a large $t,$ where
\[
\psi_{t}\left(  \mathbf{F}\right)  :=\frac{1}{\overline{\mathbf{F}}\left(
t\right)  }\int_{t}^{\infty}\log\left(  x/t\right)  d\mathbf{F}\left(
x\right)  .
\]
Replacing $\mathbf{F}$ by $\mathbf{F}_{n}\left(  \cdot;\widehat{\theta}%
_{n}\right)  $ and letting $t=X_{n-k:n}$ yield%
\begin{align}
\widehat{\gamma}_{1}  &  =\psi_{X_{n-k:n}}\left(  \mathbf{F}_{n}\left(
\cdot;\widehat{\theta}_{n}\right)  \right) \label{gama1f}\\
&  =\frac{1}{\overline{\mathbf{F}}_{n}\left(  X_{n-k:n};\widehat{\theta}%
_{n}\right)  }\int_{X_{n-k:n}}^{\infty}\log\left(  x/X_{n-k:n}\right)
d\mathbf{F}_{n}\left(  x;\widehat{\theta}_{n}\right)  ,
\end{align}
as new estimator for $\gamma_{1}.$ Observe that
\begin{align*}
&  \int_{t}^{\infty}\log\left(  x/t\right)  d\mathbf{F}_{n}\left(
x;\widehat{\theta}_{n}\right) \\
&  =P_{n}\left(  \widehat{\theta}\right)  \int_{X_{n-k:n}}^{\infty}\log\left(
x/X_{n-k:n}\right)  \mathbf{1}\left(  x\geq X_{n-k}\right)  d\mathbf{F}%
_{n}\left(  x;\widehat{\theta}_{n}\right)  ,
\end{align*}
which may be rewritten into%
\begin{align*}
&  \frac{P_{n}\left(  \widehat{\theta}\right)  1}{n}\sum_{i=1}^{n}%
\int_{X_{n-k:n}}^{\infty}\frac{\log\left(  x/X_{n-k:n}\right)  \mathbf{1}%
\left(  x\geq X_{n-k}\right)  }{\overline{\mathbf{G}}_{\widehat{\theta}%
}\left(  X_{i}\right)  }d\mathbf{1}\left(  X_{i}\leq x\right) \\
&  =P_{n}\left(  \widehat{\theta}\right)  \frac{1}{n}\sum_{i=1}^{k}\frac
{\log\left(  X_{n-i+1}/X_{n-k:n}\right)  }{\overline{\mathbf{G}}%
_{\widehat{\theta}}\left(  X_{n-i+1:n}\right)  }.
\end{align*}
On the other hand, $\mathbf{F}\left(  X_{n-k:n};\widehat{\theta}_{n}\right)  $
equals
\[
P_{n}\left(  \widehat{\theta}\right)  \frac{1}{n}\sum_{i=1}^{n}\frac
{\mathbf{1}\left(  X_{i:n}\leq X_{n-k:n}\right)  }{\overline{\mathbf{G}%
}_{\widehat{\theta}}\left(  X_{i:n}\right)  }=P_{n}\left(  \widehat{\theta
}\right)  \frac{1}{n}\sum_{i=1}^{n-k}1/\overline{\mathbf{G}}_{\widehat{\theta
}}\left(  X_{i:n}\right)  .
\]
Hence%
\begin{align*}
\overline{\mathbf{F}}\left(  X_{n-k:n};\widehat{\theta}_{n}\right)   &
=\frac{\dfrac{1}{n}%
{\displaystyle\sum_{i=1}^{n}}
1/\overline{\mathbf{G}}_{\widehat{\theta}}\left(  X_{i:n}\right)  -\dfrac
{1}{n}%
{\displaystyle\sum_{i=1}^{n-k}}
1/\overline{\mathbf{G}}_{\widehat{\theta}}\left(  X_{i:n}\right)  }{\dfrac
{1}{n}%
{\displaystyle\sum_{i=1}^{n}}
1/\overline{\mathbf{G}}_{\widehat{\theta}}\left(  X_{i:n}\right)  }\\
&  =P_{n}\left(  \widehat{\theta}\right)  \frac{1}{n}\sum_{i=1}^{k}%
1/\overline{\mathbf{G}}_{\widehat{\theta}}\left(  X_{n-i+1:n}\right)  .
\end{align*}
Thereby, the final form of our new estimator is
\begin{equation}
\widehat{\gamma}_{1}=\frac{\sum_{i=1}^{k}\left(  \overline{\mathbf{G}%
}_{\widehat{\theta}}\left(  X_{n-i+1:n}\right)  \right)  ^{-1}\log\left(
X_{n-i+1}/X_{n-k:n}\right)  }{\sum_{i=1}^{k}\left(  \overline{\mathbf{G}%
}_{\widehat{\theta}}\left(  X_{n-i+1:n}\right)  \right)  ^{-1}}.
\label{estimator}%
\end{equation}
The asymptotic behavior of $\widehat{\gamma}_{1}$ will be established by means
of the following tail empirical process%
\[
\mathbf{D}_{n}\left(  x;\widehat{\theta};\gamma_{1}\right)  :=\sqrt{k}\left(
\frac{\overline{\mathbf{F}}_{n}\left(  xX_{n-k:n};\widehat{\theta}\right)
}{\overline{\mathbf{F}}_{n}\left(  X_{n-k:n};\widehat{\theta}\right)
}-x^{-1/\gamma_{1}}\right)  ,\text{\textbf{\ }for\textbf{\ }}x>1.
\]
This method was already used to establish the asymptotic behavior of Hill's
estimator for complete data (\citeauthor{deHF06}, \citeyear{deHF06}, page 162)
that we will adapt to the truncation case. Indeed, an integration by parts of
the integral $\left(  \ref{gama1f}\right)  ,$ yields%
\[
\widehat{\gamma}_{1}=\int_{1}^{\infty}x^{-1}\frac{\overline{\mathbf{F}}%
_{n}\left(  xX_{n-k:n};\widehat{\theta}\right)  }{\overline{\mathbf{F}}%
_{n}\left(  X_{n-k:n};\widehat{\theta}\right)  }dx,
\]
and therefore%
\begin{equation}
\sqrt{k}\left(  \widehat{\gamma}_{1}-\gamma_{1}\right)  =\int_{1}^{\infty
}x^{-1}\mathbf{D}_{n}\left(  x;\widehat{\theta};\gamma_{1}\right)  dx.
\label{rep}%
\end{equation}
Thus for a suitable weighted weak approximation to $\mathbf{D}_{n}\left(
\cdot;\widehat{\theta}\right)  ,$\ we may easily deduce the consistency and
asymptotic normality of $\widehat{\gamma}_{1}.$\ This process may also
contribute to the goodness-of-fit test to fitting heavy-tailed distributions
via, among others, the Kolmogorov-Smirnov and Cramer-Von Mises type statistics%
\[
\sup_{x>1}\left\vert \mathbf{D}_{n}\left(  x;\widehat{\theta},\widehat{\gamma
}_{1}\right)  \right\vert \text{ and }\int_{1}^{\infty}\mathbf{D}_{n}%
^{2}\left(  x;\widehat{\theta},\widehat{\gamma}_{1}\right)  dx^{-1/\widehat
{\gamma}_{1}}.
\]
\ More precisely, these statistics are used when testing the null hypothesis
$H_{0}:$"both $\mathbf{F}$\ and $\mathbf{G}$\ are heavy-tailed" versus the
alternative one $H_{1}:$\ "at least one of $\mathbf{F}$\ and $\mathbf{G}\ $is
not heavy-tailed", that is $H_{0}:$"$\left(  \ref{rv1}\right)  $\ holds"
versus $H_{1}:$\ "$\left(  \ref{rv1}\right)  $\ does not hold". This problem
has been already addressed by \cite{DH2006} and \cite{KP8} in the case of
complete data. The (uniform) weighted weak convergence of $\mathbf{D}%
_{n}\left(  x;\widehat{\theta},\gamma_{1}\right)  $\ and the asymptotic
normality of $\widehat{\gamma}_{1},$\ stated below, will be of great interest
to establish the limit distributions of the aforementioned test statistics.
This is out of the scope of this paper whose remainder is structured as
follows. In Section \ref{sec2}, we present our main results which consist in
the consistency and asymptotic normality of estimator $\widehat{\gamma}_{1}.$
The performance of the proposed estimator is checked by simulation in Section
\ref{sec3}. An application to a real dataset composed of induction times of
AIDS diseases is given in Section \ref{sec4}. All proofs are gathered in
Section \ref{sec5}. The proofs of two useful lemmas are postponed to the Appendix.

\section{\textbf{Main results\label{sec2}}}

\noindent The regularity assumptions, denoted $\left[  A0\right]  ,$
concerning the existence, consistency and asymptotic normality of the CLME
estimator $\widehat{\theta},$ given in $\left(  \ref{CLME}\right)  ,$ are
discussed in \cite{Anders70}. Here we only state additional conditions on df
$\mathbf{G}_{\theta}$ corresponding to Pareto-type models which are required
to establish the asymptotic behavior of our newly estimator $\widehat{\gamma
}_{1}.$

\begin{itemize}
\item $\left[  A1\right]  $ For each fixed $y,$ the function $\theta
\rightarrow\mathbf{G}_{\theta}\left(  y\right)  $ is continuously
differentiable of partial derivatives $\mathbf{G}_{\theta}^{\left(  j\right)
}=:\partial\mathbf{G}_{\theta}/\partial\theta_{j},$ $j=1,...,d.$

\item $\left[  A2\right]  $ $\overline{\mathbf{G}}_{\theta}^{\left(  j\right)
}\in\mathcal{RV}\left(  -1/\gamma_{2}\right)  .$

\item $\left[  A3\right]  $ $y^{-\epsilon}\overline{\mathbf{G}}_{\theta
}^{\left(  j\right)  }\left(  y\right)  /\overline{\mathbf{G}}_{\theta}\left(
y\right)  \rightarrow0,$ as $y\rightarrow\infty,$ for any $\epsilon
>0.\medskip$
\end{itemize}

\noindent For commonly Pareto-type models, one may easily checked that there
exist some constants $a_{j}\geq0,$ $c_{j}$ and $d_{j},$ such that
$\overline{\mathbf{G}}_{\theta}^{\left(  j\right)  }\left(  y\right)  \sim
c_{j}\left(  y^{-1/\gamma_{2}}+d_{j}\right)  \log y,$ for all large $x.$ Then
one may consider that the assumptions $\left[  A1\right]  -\left[  A3\right]
$ are not very restrictive and they may be acceptable in the extreme value theory.

\begin{theorem}
\label{Theorem1}Assume that $\overline{\mathbf{F}}\in\mathcal{RV}_{2}\left(
-1/\gamma_{1};\rho_{1},\mathbf{A}\right)  $\textbf{\ }and $\mathbf{G}_{\theta
}\in\mathcal{RV}\left(  -1/\gamma_{2}\right)  $ satisfying the assumptions
$\left[  A0\right]  -\left[  A3\right]  ,$ and suppose that $\gamma_{1}%
<\gamma_{2}.$ Then on the probability space $\left(  \Omega,\mathcal{A}%
,\mathbf{P}\right)  ,$ there exists a standard Wiener process $\left\{
W\left(  s\right)  ,0\leq s\leq1\right\}  $ such that, for any small
$0<\epsilon<1/2:$
\[
\sup_{x>1}x^{\epsilon}\left\vert \mathbf{D}_{n}\left(  x;\widehat{\theta
},\gamma_{1}\right)  -\Gamma\left(  x;W\right)  -x^{-1/\gamma_{1}}%
\dfrac{x^{\rho_{1}/\gamma_{1}}-1}{\rho_{1}\gamma_{1}}\sqrt{k}\mathbf{A}\left(
a_{k}\right)  \right\vert \overset{\mathbf{P}}{\rightarrow}0,
\]
provided that $\sqrt{k}\mathbf{A}\left(  a_{k}\right)  =O\left(  1\right)  ,$
where%
\begin{align*}
&  \Gamma\left(  x;W\right)
\begin{array}
[c]{c}%
:=
\end{array}
\frac{\gamma}{\gamma_{1}}x^{-1/\gamma_{1}}\left\{  x^{1/\gamma}W\left(
x^{-1/\gamma}\right)  -W\left(  1\right)  \right\} \\
&  \ \ \ \ \ \ \ \ \ \ \ \ \ \ \ \ \ \ \ \ \ \ +\frac{\gamma}{\gamma
_{1}+\gamma_{2}}x^{-1/\gamma_{1}}\int_{0}^{1}s^{-\gamma/\gamma_{2}-1}\left\{
x^{1/\gamma}W\left(  x^{-1/\gamma}s\right)  -W\left(  s\right)  \right\}  ds,
\end{align*}
is a centred Gaussian process and $a_{k}:=F^{\ast\leftarrow}\left(
1-k/n\right)  ,$ where
\[
F^{\ast\leftarrow}\left(  s\right)  :=\inf\left\{  x:F^{\ast}\left(  x\right)
\geq s\right\}  ,\text{ }0<s<1,
\]
denotes the quantile (or the generalized inverse) function pertaining to df
$F^{\ast}.$
\end{theorem}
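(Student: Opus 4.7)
The plan is to decompose $\mathbf{D}_n(x;\widehat{\theta},\gamma_1)$ into a deterministic term that produces the second-order bias, a centred stochastic term converging to $\Gamma(x;W)$, and remainders uniformly negligible in the weighted supremum norm $\sup_{x>1}x^{\epsilon}|\cdot|$. Write
\[
\varphi_n(x;\theta):=\frac{1}{n}\sum_{i=1}^{n}\frac{\mathbf{1}(X_i>x)}{\overline{\mathbf{G}}_{\theta}(X_i)},
\]
so that $\overline{\mathbf{F}}_n(x;\theta)=\varphi_n(x;\theta)/\varphi_n(0;\theta)$ and, by representation (F-G), $\mathbf{E}[\varphi_n(x;\theta_0)]=\overline{\mathbf{F}}(x)/p$, making the ratio of interest equal to $\varphi_n(xX_{n-k:n};\widehat{\theta})/\varphi_n(X_{n-k:n};\widehat{\theta})$. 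A Taylor expansion in $\theta$ replaces $\widehat{\theta}$ by $\theta_0$ at the cost of a correction of order $\|\widehat{\theta}-\theta_0\|$ times averages of $\overline{\mathbf{G}}_{\theta_0}^{(j)}/\overline{\mathbf{G}}_{\theta_0}^{2}$; assumptions $[A1]$--$[A3]$ force $\overline{\mathbf{G}}_{\theta_0}^{(j)}/\overline{\mathbf{G}}_{\theta_0}$ to be slowly varying, and combined with $\sqrt{n}(\widehat{\theta}-\theta_0)=O_P(1)$ from $[A0]$ and $k/n\to 0$, this correction becomes uniformly $o_P(1)$ after multiplication by the normalisation $\sqrt{k}/\overline{\mathbf{F}}_n(X_{n-k:n};\widehat{\theta})$.

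Setting $t=X_{n-k:n}$ and $\varepsilon_n(x):=\varphi_n(x;\theta_0)-\overline{\mathbf{F}}(x)/p$, the next step is the ratio linearisation
\[
\sqrt{k}\!\left[\tfrac{\varphi_n(xt;\theta_0)}{\varphi_n(t;\theta_0)}-x^{-1/\gamma_1}\right]=\tfrac{\sqrt{k}\,p}{\overline{\mathbf{F}}(t)}\!\left\{\varepsilon_n(xt)-\tfrac{\overline{\mathbf{F}}(xt)}{\overline{\mathbf{F}}(t)}\varepsilon_n(t)\right\}+\sqrt{k}\!\left[\tfrac{\overline{\mathbf{F}}(xt)}{\overline{\mathbf{F}}(t)}-x^{-1/\gamma_1}\right]+o_P(1).
\]
The second, deterministic, term is handled by the second-order condition $\overline{\mathbf{F}}\in\mathcal{RV}_2(-1/\gamma_1;\rho_1,\mathbf{A})$ evaluated at $a_k$, followed by an interchange of $a_k$ and $X_{n-k:n}$ via Potter's inequalities using $X_{n-k:n}/a_k\overset{\mathbf{P}}{\to}1$; this produces precisely the bias contribution $x^{-1/\gamma_1}(x^{\rho_1/\gamma_1}-1)/(\rho_1\gamma_1)\sqrt{k}\mathbf{A}(a_k)$ stated in the theorem.

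For the stochastic term, integration by parts of the integral defining $\varepsilon_n$ yields
\[
\sqrt{n}\,\varepsilon_n(x)=\frac{\beta_n(x)}{\overline{\mathbf{G}}_{\theta_0}(x)}+\int_{x}^{\infty}\frac{\beta_n(w)\,\mathbf{g}_{\theta_0}(w)}{\overline{\mathbf{G}}_{\theta_0}^{2}(w)}\,dw,
\]
where $\beta_n:=\sqrt{n}(F^{*}-\alpha_n)$ is the centred empirical process of the $X_i$'s. I would invoke a Skorokhod-type Gaussian coupling for $\beta_n\circ F^{*\leftarrow}$ (the tool used on page 162 of de Haan and Ferreira in the complete-data Hill argument) to couple this with a standard Wiener process $W$ at scale $k/n$. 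After the substitution $w=F^{*\leftarrow}(1-(k/n)s)$ and using $\overline{F}^{*}\in\mathcal{RV}(-1/\gamma)$ together with Karamata's theorem to convert $\overline{\mathbf{G}}_{\theta_0}(w)/\overline{\mathbf{F}}(t)$ into appropriate powers of $s$, the pointwise piece $\beta_n(xt)/\overline{\mathbf{G}}_{\theta_0}(xt)$ produces the first summand $(\gamma/\gamma_1)x^{-1/\gamma_1}\{x^{1/\gamma}W(x^{-1/\gamma})-W(1)\}$ of $\Gamma$, while the integral over $w$ produces the second summand $(\gamma/(\gamma_1+\gamma_2))x^{-1/\gamma_1}\int_0^1 s^{-\gamma/\gamma_2-1}\{x^{1/\gamma}W(x^{-1/\gamma}s)-W(s)\}\,ds$; in both cases the $-W(1)$ and $-W(s)$ pieces arise from the subtracted term $(\overline{\mathbf{F}}(xt)/\overline{\mathbf{F}}(t))\varepsilon_n(t)$.

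The main obstacle will be establishing the uniform-in-$x$ weighted convergence with weight $x^{\epsilon}$ for $x>1$. One must apply weighted Potter bounds to $\overline{\mathbf{F}}$, $\overline{\mathbf{G}}_{\theta_0}$ and its partial derivatives at the random threshold $X_{n-k:n}$, which forces one to work on the high-probability event $\{|X_{n-k:n}/a_k-1|<\delta\}$ and to carefully propagate the small exponent $\epsilon$ through the Gaussian coupling error. The integrability of $s^{-\gamma/\gamma_2-1}|W(s)|$ near $s=0$ is ensured precisely by the assumption $\gamma_1<\gamma_2$ (equivalently $\gamma/\gamma_2<1/2$), so that the Gaussian limit $\Gamma$ is well defined and its remainder is $o_P(1)$ in the weighted norm. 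The standard way around this is to split $x\in(1,M]$ and $x>M$, to use the Gaussian coupling and uniform Potter bounds on the first range, and to dominate the far-tail contribution by a moment or chaining argument together with the auxiliary lemmas whose proofs are postponed to the Appendix.
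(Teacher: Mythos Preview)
Your proposal is correct and follows essentially the same route as the paper: the paper decomposes $k^{-1/2}\mathbf{D}_n$ into five pieces $\mathbf{M}_{n1},\dots,\mathbf{M}_{n5}$ (the $\widehat{\theta}\to\theta$ mean-value correction, the main stochastic fluctuation, the denominator fluctuation, a cross term, and the second-order bias), handles $\mathbf{M}_{n2}$ by the same integration by parts plus the weighted tail-empirical Wiener approximation of Einmahl, de Haan and Li (2006, Proposition~3.1), and obtains uniformity directly from Potter bounds rather than via a split $x\in(1,M]\cup(M,\infty)$. One notational slip: your $\beta_n:=\sqrt{n}(F^{*}-\alpha_n)$ conflicts with the paper's $\alpha_n$ (the tail empirical process at scale $\sqrt{k}$); you mean the ordinary empirical process $\sqrt{n}(\overline{F}_n^{*}-\overline{F}^{*})$, which is exactly what the paper feeds into the coupling.
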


\noindent By a strength application of this weak approximation, we establish
both consistency and asymptotic normality of our newly estimator
$\widehat{\gamma}_{1},$ that we state there in the following Theorem.

\begin{theorem}
\label{Theorem2}Under the assumptions of Theorem $\ref{Theorem1},$ we have%
\begin{align*}
&  \widehat{\gamma}_{1}-\gamma_{1}\\
&  =k^{-1/2}\int_{1}^{\infty}x^{-1}\Gamma\left(  x;W\right)  dx+\mathbf{A}%
\left(  a_{k}\right)  \int_{1}^{\infty}x^{-1/\gamma_{1}-1}\dfrac{x^{\rho
_{1}/\gamma_{1}}-1}{\rho_{1}\gamma_{1}}dx+o_{\mathbf{P}}\left(  k^{-1/2}%
\right)  ,
\end{align*}
this implies that $\widehat{\gamma}_{1}\overset{\mathbf{P}}{\rightarrow}%
\gamma_{1}.$ Whenever $\sqrt{k}\mathbf{A}\left(  a_{k}\right)  \rightarrow
\lambda<\infty,$ we get
\[
\sqrt{k}\left(  \widehat{\gamma}_{1}-\gamma_{1}\right)  \overset{\mathcal{D}%
}{\rightarrow}\mathcal{N}\left(  \frac{\lambda}{1-\rho_{1}},\sigma^{2}\right)
,
\]
where $\sigma^{2}:=\gamma^{2}\left(  1+\gamma_{1}/\gamma_{2}\right)  \left(
1+\left(  \gamma_{1}/\gamma_{2}\right)  ^{2}\right)  \left(  1-\gamma
_{1}/\gamma_{2}\right)  ^{3}\mathbf{1}\left(  \gamma_{1}<\gamma_{2}\right)  ,$
and $\mathbf{1}\left(  \mathcal{A}\right)  $ stands for the indicator function
pertaining to a set $\mathcal{A}.$
\end{theorem}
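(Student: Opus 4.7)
\noindent The plan is to exploit the integral representation $(\ref{rep})$ already established in the text, together with the uniform weighted approximation supplied by Theorem \ref{Theorem1}. Starting from
\[
\sqrt{k}\bigl(\widehat{\gamma}_{1}-\gamma_{1}\bigr)=\int_{1}^{\infty}x^{-1}\mathbf{D}_{n}(x;\widehat{\theta};\gamma_{1})\,dx,
\]
I would write $\mathbf{D}_{n}(x;\widehat{\theta},\gamma_{1})=\Gamma(x;W)+x^{-1/\gamma_{1}}\bigl(x^{\rho_{1}/\gamma_{1}}-1\bigr)/(\rho_{1}\gamma_{1})\sqrt{k}\mathbf{A}(a_{k})+R_{n}(x)$, where Theorem \ref{Theorem1} yields $\sup_{x>1}x^{\epsilon}|R_{n}(x)|=o_{\mathbf{P}}(1)$ for some fixed $\epsilon\in(0,1/2)$. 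The contribution of the remainder is then controlled by
\[
\int_{1}^{\infty}x^{-1}|R_{n}(x)|\,dx\leq\Bigl(\sup_{x>1}x^{\epsilon}|R_{n}(x)|\Bigr)\int_{1}^{\infty}x^{-1-\epsilon}\,dx=o_{\mathbf{P}}(1),
\]
which, after dividing by $\sqrt{k}$, produces exactly the stochastic expansion stated in the theorem. Consistency follows immediately: the first term is $O_{\mathbf{P}}(k^{-1/2})$ and, since $\sqrt{k}\mathbf{A}(a_{k})=O(1)$, the second is $O(k^{-1/2})$, so $\widehat{\gamma}_{1}-\gamma_{1}=O_{\mathbf{P}}(k^{-1/2})\to 0$.

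\medskip

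\noindent The bias integral is elementary. Splitting the numerator and using $\rho_{1}<0$ to justify convergence at infinity,
\[
\int_{1}^{\infty}x^{-1/\gamma_{1}-1}\frac{x^{\rho_{1}/\gamma_{1}}-1}{\rho_{1}\gamma_{1}}\,dx=\frac{1}{\rho_{1}\gamma_{1}}\Bigl(\frac{\gamma_{1}}{1-\rho_{1}}-\gamma_{1}\Bigr)=\frac{1}{1-\rho_{1}}.
\]
Consequently, under the calibration $\sqrt{k}\mathbf{A}(a_{k})\to\lambda$, this term contributes the asymptotic bias $\lambda/(1-\rho_{1})$ to $\sqrt{k}(\widehat{\gamma}_{1}-\gamma_{1})$.

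\medskip

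\noindent The main obstacle is the identification of the limiting random variable
\[
Z:=\int_{1}^{\infty}x^{-1}\Gamma(x;W)\,dx
\]
as a centred normal with variance $\sigma^{2}$. Because $Z$ is a deterministic linear functional of the Wiener process $W$, it is automatically Gaussian with mean zero, so the task reduces to computing $\mathbf{E}[Z^{2}]$. I would split $Z=Z_{1}+Z_{2}$ according to the two summands in the definition of $\Gamma(x;W)$, perform the change of variables $u=x^{-1/\gamma}$ in each $Z_{j}$ so as to rewrite them as weighted Wiener integrals on $(0,1)$, then appeal to Fubini and the covariance identity $\mathbf{E}[W(s)W(t)]=\min(s,t)$ to reduce the second moments to ordinary double integrals over $(0,1)^{2}$. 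After careful but routine algebra, the three contributions $\mathbf{E}[Z_{1}^{2}]$, $2\mathbf{E}[Z_{1}Z_{2}]$ and $\mathbf{E}[Z_{2}^{2}]$ combine to yield the announced form $\sigma^{2}=\gamma^{2}(1+\gamma_{1}/\gamma_{2})(1+(\gamma_{1}/\gamma_{2})^{2})(1-\gamma_{1}/\gamma_{2})^{3}$; the condition $\gamma_{1}<\gamma_{2}$ enters precisely to guarantee absolute convergence of the inner integrals near $u=0$, which is why the indicator $\mathbf{1}(\gamma_{1}<\gamma_{2})$ appears in the final expression. Combining the Gaussian limit for $Z$ with the bias computation above and applying Slutsky's lemma completes the proof of the asymptotic normality.
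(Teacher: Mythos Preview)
Your proposal is correct and follows essentially the same route as the paper: start from the integral representation $(\ref{rep})$, insert the weighted approximation of Theorem~\ref{Theorem1}, bound the remainder via $\int_{1}^{\infty}x^{-1-\epsilon}\,dx<\infty$, evaluate the bias integral to $1/(1-\rho_{1})$, and conclude by Gaussianity of the linear functional of $W$ together with Slutsky. The paper proceeds identically (its $T_{n1},T_{n2},T_{n3}$ are exactly your three pieces) and in fact omits the variance computation that you sketch, simply asserting $\mathbf{E}\bigl[\int_{1}^{\infty}x^{-1}\Gamma(x;W)\,dx\bigr]^{2}=\sigma^{2}$ after ``elementary calculation''.
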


\section{\textbf{Simulation study \label{sec3}}}

\noindent In this section we will perform a simulation study in order to
compare the finite sample behavior of our the newly semiparametric estimator
$\widehat{\gamma}_{1}$, \ given in $\left(  \ref{estimator}\right)  ,$ with
the Woodrofee and the Lynden-Bell integral estimators $\widehat{\gamma}%
_{1}^{\left(  \mathbf{BMN}\right)  }$ and $\widehat{\gamma}_{1}^{\left(
\mathbf{W}\right)  },$ given respectively in $\left(  \ref{BMN}\right)  $ and
$\left(  \ref{WW}\right)  .$ The truncation and truncated distributions
functions $\mathbf{F}$ and $\mathbf{G},$ will be chosen among the following
two models:

\begin{itemize}
\item Burr $\left(  \gamma,\delta\right)  $ distribution with right-tail
function:%
\[
\overline{H}\left(  x\right)  =\left(  1+x^{1/\delta}\right)  ^{-\delta
/\gamma},\text{ }x\geq0,\text{ }\delta>0,\text{ }\gamma>0;
\]

\item Fr\'{e}chet $\left(  \gamma\right)  $ distribution with right-tail
function:%
\[
\overline{H}\left(  x\right)  =1-\exp\left(  -x^{-1/\gamma}\right)
,x>0,\gamma>0.
\]

\end{itemize}

\noindent The simulation study be made in fours scenarios following to the
choice of the underlying df's $\mathbf{F}$ and $\mathbf{G}_{\theta}\mathbf{:}%
$\smallskip

\begin{itemize}
\item $\left[  S1\right]  $ Burr $\left(  \gamma_{1},\delta\right)  $
truncated by Burr $\left(  \gamma_{2},\delta\right)  ;$ with $\theta=\left(
\gamma_{2},\delta\right)  $

\item $\left[  S2\right]  $ Fr\'{e}chet $\left(  \gamma_{1}\right)  $
truncated by Fr\'{e}chet $\left(  \gamma_{2}\right)  ;$ with $\theta
=\gamma_{2}$

\item $\left[  S3\right]  $ Fr\'{e}chet $\left(  \gamma_{1}\right)  $
truncated by Burr $\left(  \gamma_{2},\delta\right)  ;$ with $\theta=\left(
\gamma_{2},\delta\right)  $

\item $\left[  S4\right]  $ Burr $\left(  \gamma_{1},\delta\right)  $
truncated by Fr\'{e}chet $\left(  \gamma_{2}\right)  ;$ with $\theta
=\gamma_{2}$
\end{itemize}

\noindent To this end, we fix $\delta=1/4$ and choose the values $0.6$ and
$0.8$ for $\gamma_{1}$ and $55\%$ and $90\%$ for the portions of observed
truncated data given in $\left(  \ref{p}\right)  $ by%
\begin{equation}
p=\int_{0}^{\infty}\mathbf{F}\left(  w\right)  d\mathbf{G}_{\theta}\left(
w\right)  , \label{e}%
\end{equation}
so that the assumption $\gamma_{1}<$ $\gamma_{2}$ stated in Theorem
$\ref{Theorem1}$ be hold. In other terms the values of $p$ have to be greater
than $50\%.$ For each couple $\left(  \gamma_{1},p\right)  ,$ we solve the
equation $\left(  \ref{e}\right)  $ to get the pertaining $\gamma_{2}$-value,
which we summarize as follows:
\begin{equation}
\left(  p,\gamma_{1},\gamma_{2}\right)  =\left(  55\%,0.6,1.4\right)  ,\left(
90\%,0.6,5.4\right)  ,\left(  55\%,0.8,1.9\right)  ,\left(
90\%,0.8,7.2\right)  . \label{values}%
\end{equation}
For each scenario, we simulated $1000$ random samples of size $N$ $=300$ and
compute the root mean squared error (RMSE) and the absolute bias (ABIAS)
corresponding to each estimator $\widehat{\gamma}_{1},$ $\widehat{\gamma}%
_{1}^{\left(  \mathbf{BMN}\right)  }$ and $\widehat{\gamma}_{1}^{\left(
\mathbf{W}\right)  }.$ The comparison is carry out by plotting the ABIAS and
RMSE as functions of the sample fraction $k$ which is vary from $2$ to $120.$
The end points of this range is chosen so that it contains the optimal number
of upper extremes $k^{\ast}$ used in the computation of the tail index
estimate. There are many heuristic methods to select the optimal choice of
$k^{\ast},$ see for instance \cite{CG-15}, here we use the algorithm proposed
by \cite{ReTo7} in page 137, which is incorporated in the R software
\textquotedblleft Xtremes\textquotedblright package. Note that the computation
the CLME of $\theta$ is made by means of the syntax "maxLik" of the maxLik R
software package. The optimal sample fraction $k^{\ast}$ is defined, in this
procedure, by%
\[
k^{\ast}:=\arg\min_{1<k<n}\frac{1}{k}\sum_{i=1}^{k}i^{\theta}\left\vert
\widehat{\gamma}\left(  i\right)  -\text{median}\left\{  \widehat{\gamma
}\left(  1\right)  ,...,\widehat{\gamma}\left(  k\right)  \right\}
\right\vert ,
\]
for suitable constant $0\leq\theta\leq1/2,$ where $\widehat{\gamma}\left(
i\right)  $ corresponds to an estimator of tail index $\gamma,$ based on the
$i$ upper order statistics, of a Pareto-type model. We observed, in our
simulation study, that $\theta=0.3$ allows better results both in terms of
bias and rmse. It is worth mentioning that making $N$ vary did not provide
notable findings, therefore we kept the size $N$ be fixed. The finite sample
behavior of the above mentioned estimators are illustrated in Figures
\ref{s1}-\ref{s8}. On the overall, the biases of three estimators are almost
equal, however in the case of moderate truncation $\left(  p\approx
50\%\right)  $ the RMSE of our newly semiparametric $\widehat{\gamma}_{1}$ is
clearly the smaller compared that of $\widehat{\gamma}_{1}^{\left(
\mathbf{BMN}\right)  }$ and $\widehat{\gamma}_{1}^{\left(  \mathbf{W}\right)
}.$ Actually, the moderate truncation situation is the most frequently in real
data, while up to our knowledge the strong truncation remains theoretic. In
this sense, we may consider that the semiparametric estimator is more
efficient than the two other ones. We point out that the two estimators
$\widehat{\gamma}_{1}^{\left(  \mathbf{BMN}\right)  }$ and $\widehat{\gamma
}_{1}^{\left(  \mathbf{W}\right)  }$ have almost the same behavior which
actually is noticed before by \cite{Haouas18}. The optimal sample fractions
$k^{\ast}$ of each tail index estimator are given in Tables \ref{tab1}%
-\ref{tab4}.

\begin{center}%
\begin{figure}
[ptb]
\begin{center}
\includegraphics[
height=3.9185in,
width=4.0413in
]%
{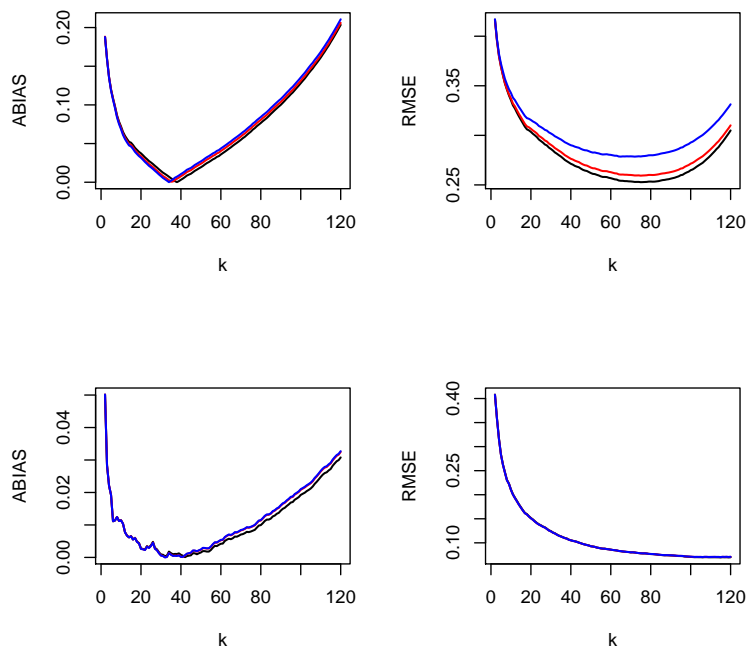}%
\caption{Absolute bias (left two panels) and RMSE (right two panels) of
$\widehat{\gamma}_{1}$ (black) and $\widehat{\gamma}_{1}^{\left(
\mathbf{BMN}\right)  }$ (red) and $\widehat{\gamma}_{1}^{\left(
\mathbf{W}\right)  }$(blue), corresponding to two situations of scenario
$S_{1}:\left(  \gamma_{1}=0.6,p=55\%\right)  $ and $\left(  \gamma
_{1}=0.6,p=90\%\right)  $ based on $1000$ samples of size $300.$}%
\label{s1}%
\end{center}
\end{figure}
%

\begin{figure}
[ptb]
\begin{center}
\includegraphics[
height=3.915in,
width=4.0439in
]%
{Sinario1_g1=0.8_.eps}%
\caption{Absolute bias (left two panels) and RMSE (right two panels) of
$\widehat{\gamma}_{1}$ (black) and $\widehat{\gamma}_{1}^{\left(
\mathbf{BMN}\right)  }$ (red) and $\widehat{\gamma}_{1}^{\left(
\mathbf{W}\right)  }$(blue), corresponding to two situations of scenario
$S_{1}:\left(  \gamma_{1}=0.8,p=55\%\right)  $ and $\left(  \gamma
_{1}=0.8,p=90\%\right)  $ based on $1000$ samples of size $300.$}%
\label{s2}%
\end{center}
\end{figure}
%

\begin{figure}
[ptb]
\begin{center}
\includegraphics[
height=3.915in,
width=4.0439in
]%
{Sinario2_g1=0.6_.eps}%
\caption{Absolute bias (left two panels) and RMSE (right two panels) of
$\widehat{\gamma}_{1}$ (black) and $\widehat{\gamma}_{1}^{\left(
\mathbf{BMN}\right)  }$ (red) and $\widehat{\gamma}_{1}^{\left(
\mathbf{W}\right)  }$(blue), corresponding to two situations of scenario
$S_{2}:\left(  \gamma_{1}=0.6,p=55\%\right)  $ and $\left(  \gamma
_{1}=0.6,p=90\%\right)  $ based on $1000$ samples of size $300.$}%
\label{s3}%
\end{center}
\end{figure}
%

\begin{figure}
[ptb]
\begin{center}
\includegraphics[
height=3.915in,
width=4.0439in
]%
{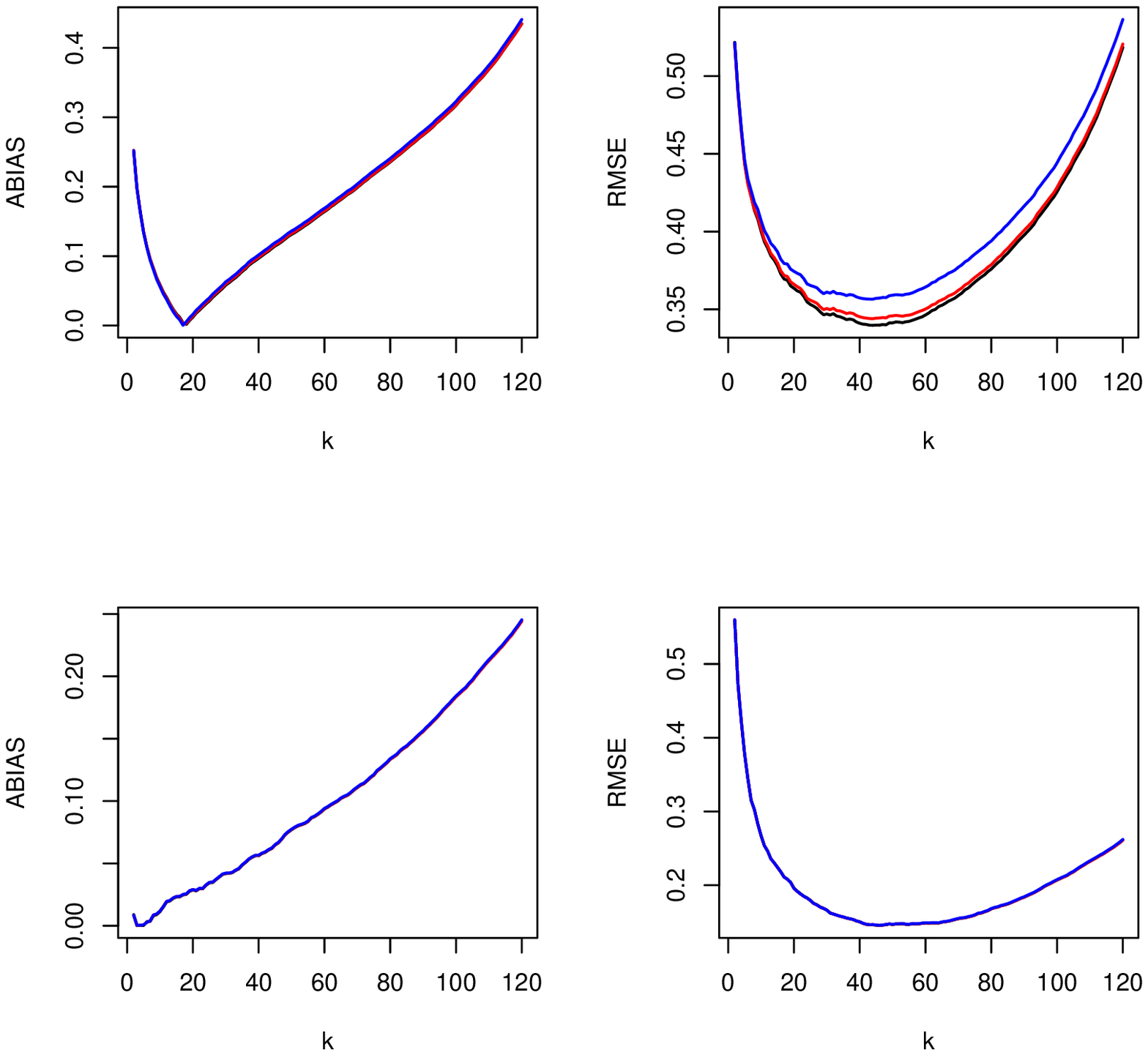}%
\caption{Absolute bias (left two panels) and RMSE (right two panels) of
$\widehat{\gamma}_{1}$ (black) and $\widehat{\gamma}_{1}^{\left(
\mathbf{BMN}\right)  }$ (red) and $\widehat{\gamma}_{1}^{\left(
\mathbf{W}\right)  }$(blue), corresponding to two situations of scenario
$S_{2}:\left(  \gamma_{1}=0.8,p=55\%\right)  $ and $\left(  \gamma
_{1}=0.8,p=90\%\right)  $ based on $1000$ samples of size $300.$}%
\label{s4}%
\end{center}
\end{figure}
%

\begin{figure}
[ptb]
\begin{center}
\includegraphics[
trim=0.000000in 0.000000in -0.335747in 0.000000in,
height=3.915in,
width=4.0568in
]%
{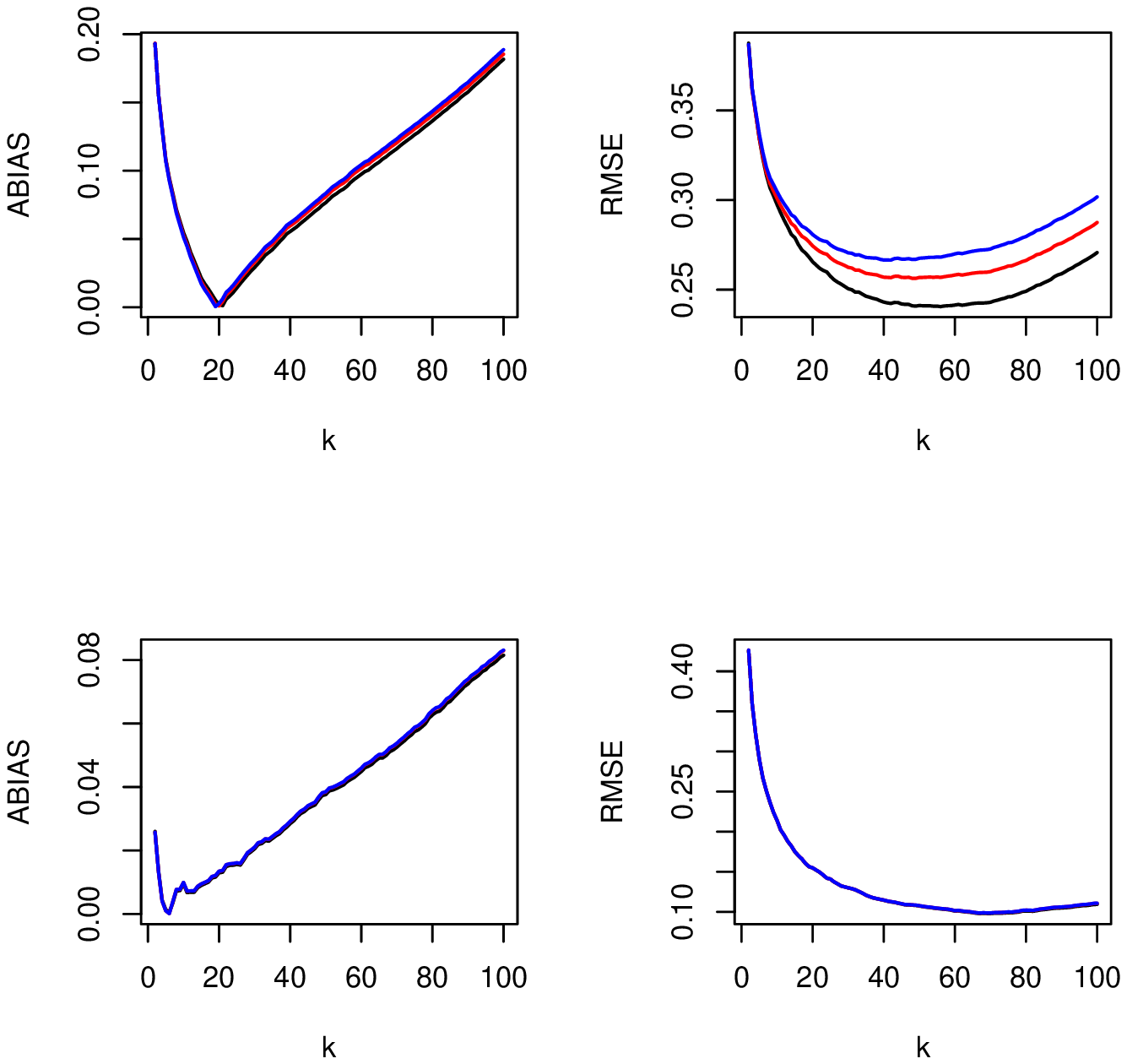}%
\caption{Absolute bias (left two panels) and RMSE (right two panels) of
$\widehat{\gamma}_{1}$ (black) and $\widehat{\gamma}_{1}^{\left(
\mathbf{MBN}\right)  }$ (red) and $\widehat{\gamma}_{1}^{\left(
\mathbf{W}\right)  }$(blue), corresponding to two situations of scenario
$S_{3}:\left(  \gamma_{1}=0.6,p=55\%\right)  $ and $\left(  \gamma
_{1}=0.6,p=90\%\right)  $ based on $1000$ samples of size $300.$}%
\label{s5}%
\end{center}
\end{figure}
%

\begin{figure}
[ptb]
\begin{center}
\includegraphics[
trim=0.000000in 0.000000in -0.304621in 0.000000in,
height=3.915in,
width=4.0439in
]%
{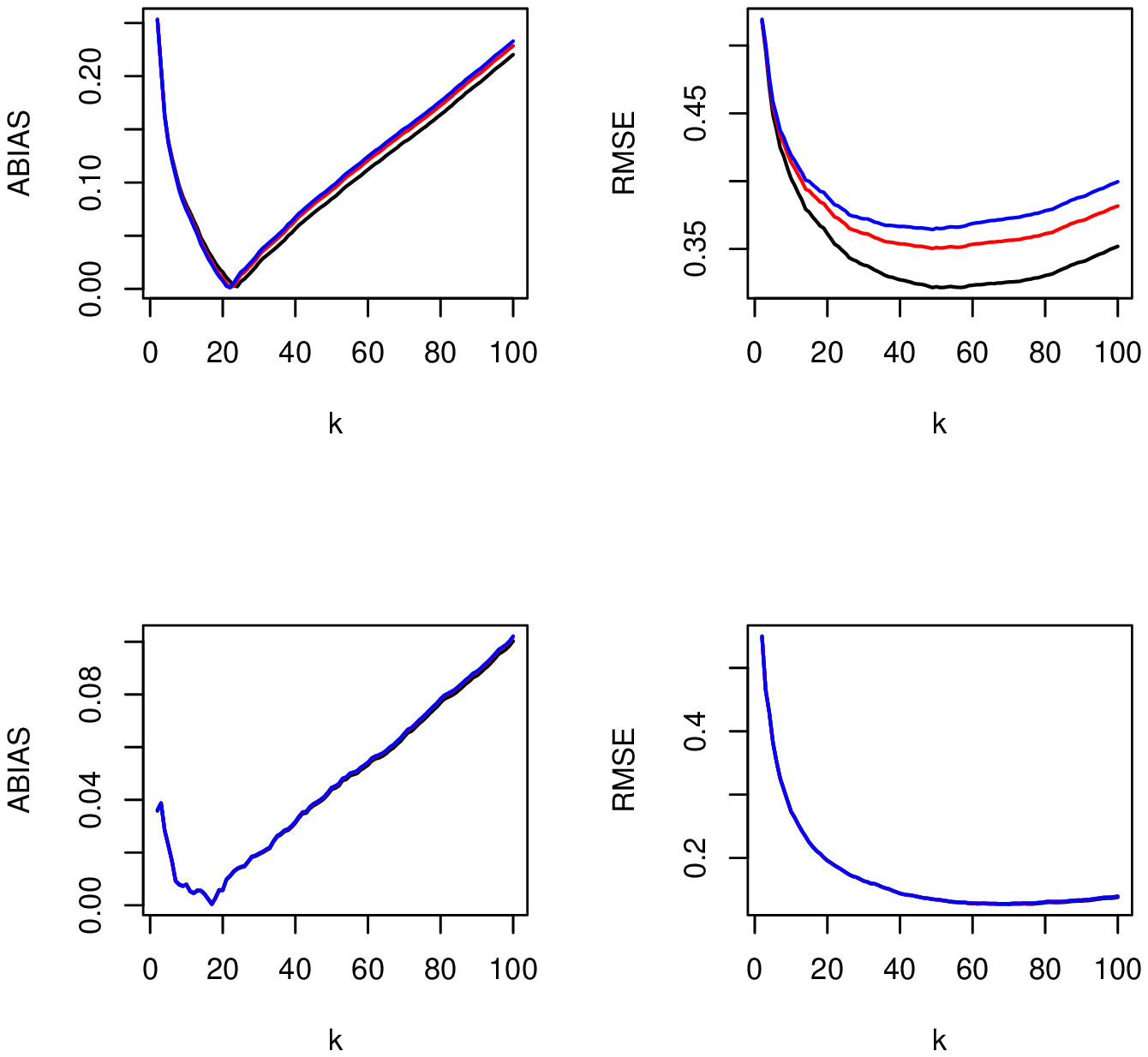}%
\caption{Absolute bias (left two panels) and RMSE (right two panels) of
$\widehat{\gamma}_{1}$ (black) and $\widehat{\gamma}_{1}^{\left(
\mathbf{BMN}\right)  }$ (red) and $\widehat{\gamma}_{1}^{\left(
\mathbf{W}\right)  }$(blue), corresponding to two situations of scenario
$S_{3}:\left(  \gamma_{1}=0.8,p=55\%\right)  $ and $\left(  \gamma
_{1}=0.8,p=90\%\right)  $ based on $1000$ samples of size $300.$}%
\label{s6}%
\end{center}
\end{figure}
%

\begin{figure}
[ptb]
\begin{center}
\includegraphics[
trim=0.000000in 0.000000in -0.175827in 0.000000in,
height=3.915in,
width=4.0439in
]%
{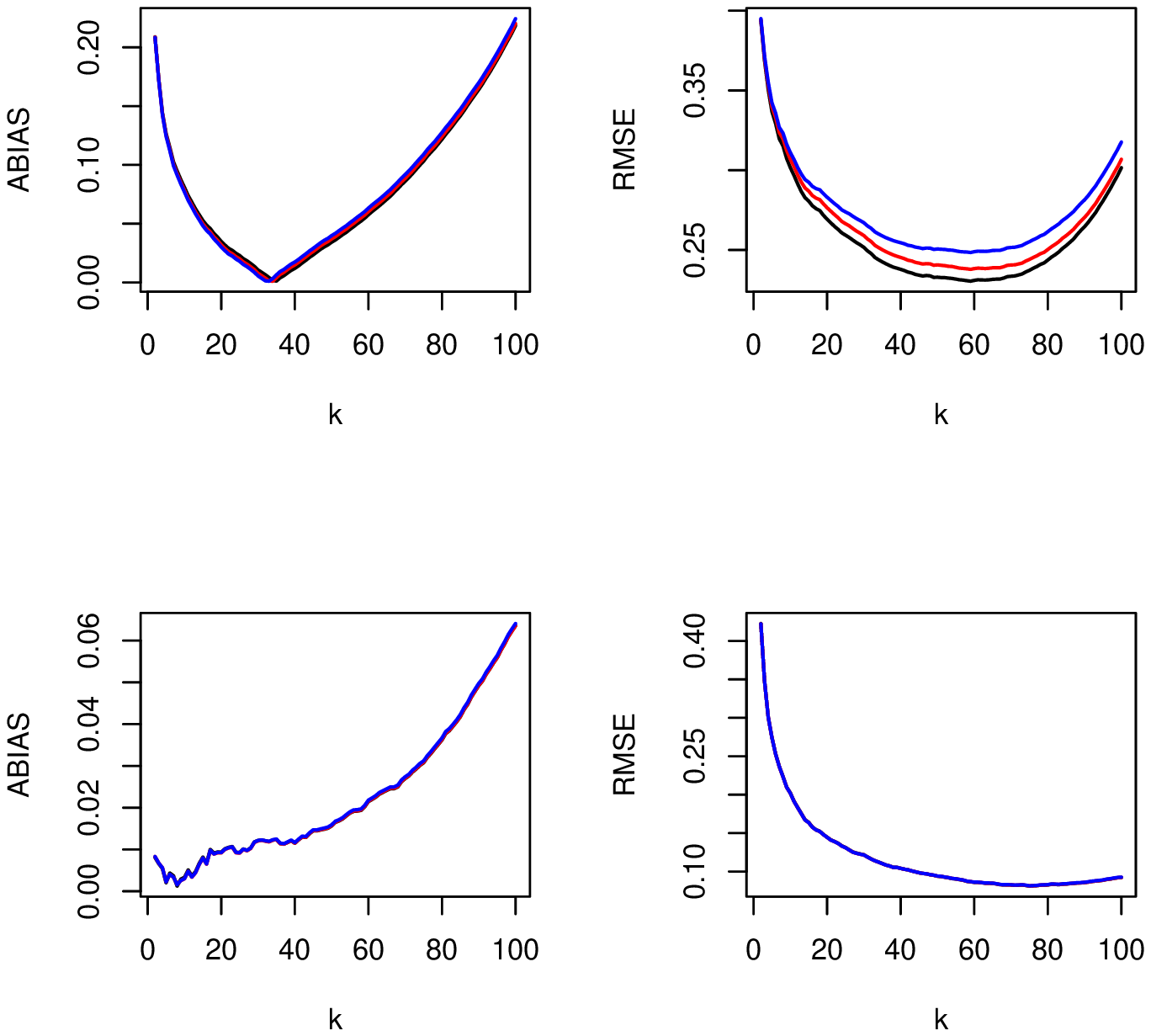}%
\caption{Absolute bias (left two panels) and RMSE (right two panels) of
$\widehat{\gamma}_{1}$ (black) and $\widehat{\gamma}_{1}^{\left(
\mathbf{BMN}\right)  }$ (red) and $\widehat{\gamma}_{1}^{\left(
\mathbf{W}\right)  }$(blue), corresponding to two situations of scenario
$S_{4}:\left(  \gamma_{1}=0.6,p=55\%\right)  $ and $\left(  \gamma
_{1}=0.6,p=90\%\right)  $ based on $1000$ samples of size $300.$}%
\label{s7}%
\end{center}
\end{figure}
%

\begin{figure}
[ptb]
\begin{center}
\includegraphics[
trim=0.000000in 0.000000in -0.175827in 0.000000in,
height=3.915in,
width=4.0439in
]%
{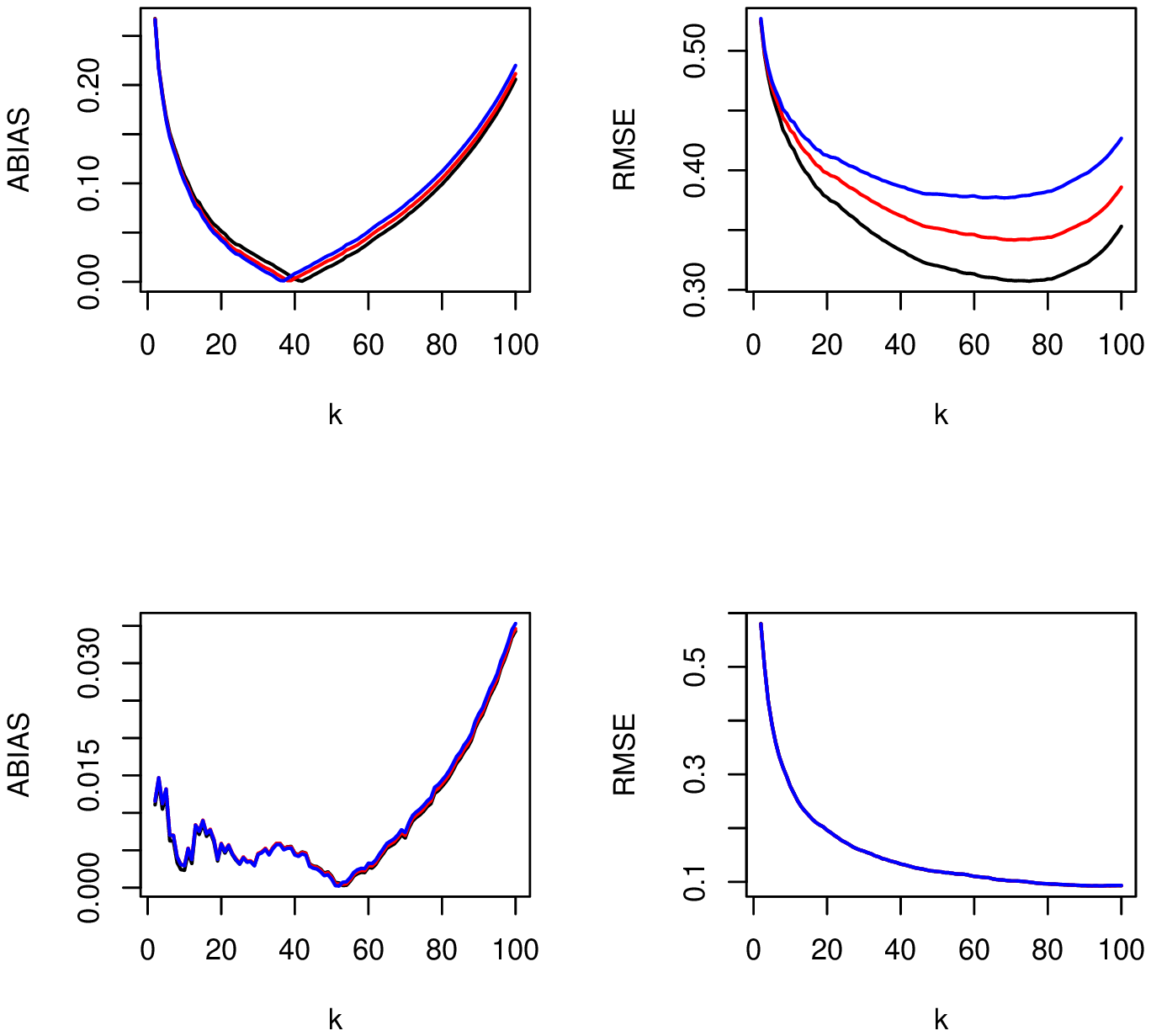}%
\caption{Absolute bias (left two panels) and RMSE (right two panels) of
$\widehat{\gamma}_{1}$ (black) and $\widehat{\gamma}_{1}^{\left(
\mathbf{BMN}\right)  }$ (red) and $\widehat{\gamma}_{1}^{\left(
\mathbf{W}\right)  }$(blue), corresponding to two situations of scenario
$S_{4}:\left(  \gamma_{1}=0.8,p=55\%\right)  $ and $\left(  \gamma
_{1}=0.8,p=90\%\right)  $ based on $1000$ samples of size $300.$}%
\label{s8}%
\end{center}
\end{figure}
%

\begin{table}[h] \centering
$%
\begin{tabular}
[c]{c|cc||cc||cc}
& $k^{\ast}$ & $\widehat{\gamma}_{1}$ & $k^{\ast}$ & $\widehat{\gamma}%
_{1}^{\left(  \mathbf{BMN}\right)  }$ & $k^{\ast}$ & $\widehat{\gamma}%
_{1}^{\left(  \mathbf{W}\right)  }$\\\hline
$S1$ & $44$ & $0.600$ & $41$ & $0.599$ & $40$ & $0.600$\\\hline
$S2$ & $18$ & $0.601$ & $17$ & $0.600$ & $16$ & $0.597$\\\hline
$S3$ & $21$ & $0.601$ & $20$ & $0.601$ & $19$ & $0.599$\\\hline
$S4$ & $30$ & $0.603$ & $27$ & $0.600$ & $25$ & $0.598$\\\hline
\end{tabular}
$%
\caption{Optimal sample fraction $k^{\ast}$ and the estimated value of each estimator of the tail index  $\gamma_{1}=0.6$  based on 1000 samples for the four scenarios with $p=0.55$ .
}\label{tab1}%
\end{table}%
%

\begin{table}[h] \centering
$%
\begin{tabular}
[c]{c|cc||cc||cc}
& $k^{\ast}$ & $\widehat{\gamma}_{1}$ & $k^{\ast}$ & $\widehat{\gamma}%
_{1}^{\left(  \mathbf{BMN}\right)  }$ & $k^{\ast}$ & $\widehat{\gamma}%
_{1}^{\left(  \mathbf{W}\right)  }$\\\hline
$S1$ & $82$ & $0.610$ & $82$ & $0.611$ & $82$ & $0.611$\\\hline
$S2$ & $37$ & $0.640$ & $37$ & $0.640$ & $37$ & $0.640$\\\hline
$S3$ & $46$ & $0.633$ & $37$ & $0.625$ & $37$ & $0.625$\\\hline
$S4$ & $52$ & $0.610$ & $52$ & $0.610$ & $52$ & $0.610$\\\hline
\end{tabular}
$%
\caption{Optimal sample fraction $k^{\ast}$ and the estimated value of each estimator of the tail index  $\gamma_{1}=0.6$  based on 1000 samples for  the four scenarios with $p=0.9$.
}\label{tab2}%
\end{table}%
%

\begin{table}[h] \centering
$%
\begin{tabular}
[c]{c|cc||cc||cc}
& $k^{\ast}$ & $\widehat{\gamma}_{1}$ & $k^{\ast}$ & $\widehat{\gamma}%
_{1}^{\left(  \mathbf{BMN}\right)  }$ & $k^{\ast}$ & $\widehat{\gamma}%
_{1}^{\left(  \mathbf{W}\right)  }$\\\hline
$S1$ & $59$ & $0.799$ & $57$ & $0.800$ & $54$ & $0.799$\\\hline
$S2$ & $21$ & $0.803$ & $21$ & $0.803$ & $20$ & $0.799$\\\hline
$S3$ & $24$ & $0.802$ & $22$ & $0.798$ & $22$ & $0.801$\\\hline
$S4$ & $51$ & $0.799$ & $52$ & $0.800$ & $50$ & $0.801$\\\hline
\end{tabular}
$%
\caption{Optimal sample fraction $k^{\ast}$ and the estimated value of each estimator of the tail index  $\gamma_{1}=0.8$  based on 1000 samples for the four scenarios with $p=0.55$ .
}\label{tab3}%
\end{table}%
%

\begin{table}[h] \centering
$%
\begin{tabular}
[c]{c|cc||cc||cc}
& $k^{\ast}$ & $\widehat{\gamma}_{1}$ & $k^{\ast}$ & $\widehat{\gamma}%
_{1}^{\left(  \mathbf{BMN}\right)  }$ & $k^{\ast}$ & $\widehat{\gamma}%
_{1}^{\left(  \mathbf{W}\right)  }$\\\hline
$S1$ & $90$ & $0.804$ & $90$ & $0.806$ & $90$ & $0.807$\\\hline
$S2$ & $34$ & $0.845$ & $34$ & $0.846$ & $34$ & $0.846$\\\hline
$S3$ & $40$ & $0.831$ & $40$ & $0.831$ & $40$ & $0.831$\\\hline
$S4$ & $71$ & $0.814$ & $71$ & $0.814$ & $71$ & $0.815$\\\hline
\end{tabular}
$%
\caption{Optimal sample fraction $k^{\ast}$ and the estimated value of each estimator of the tail index   $\gamma_{1}=0.8$  based on 1000 samples for  the four scenarios with $p=0.9$.
}\label{tab4}%
\end{table}%

\end{center}

\section{\textbf{Real data example\label{sec4}}}

\noindent In this section, we give an application to the AIDS data set,
available in the "DTDA" R package, used before by \cite{Lagakos88}. The data
present the infection and induction times for $n=258$ adults who were infected
with HIV virus and developed AIDS by June $30,1986.$ The time in years,
measured from April $1,1978,$ when adults were infected by the virus from a
contaminated blood transfusion and the waiting time to development of AIDS
measured from the date of infection. We interest here to the estimation of the
end-time of the induction of the AIDS virus which corresponds to the high
quantile in basis the given observations. The variable of interest here is the
time of induction $\mathbf{T}$ of the disease duration which elapses between
the date of infection $M$ and the date $M+T$ of the declaration of the
disease. The sample $(T_{1},M_{1}),...,(T_{n},M_{n})$ are taken between two
fixed dates: $"0"$ and $"8"$, i.e. between April $1,1978,$ and June $30,1986.$
The initial date $"0"$ denotes an infection occurring in the three months:
from April $1,1978$, to June $30,1978.$ Let us assume that $M$ and $T$ are the
observed rv's, corresponding to the underlying rv's $\mathbf{M}$ and
$\mathbf{T},$ given by the truncation scheme $0\leq M+T\leq8,$ which in turn
may be rewritten into%
\begin{equation}
0\leq M\leq S, \label{MT}%
\end{equation}
where $S:=8-T.$ To work within the framework of the present paper, let us make
the following transformations:%
\begin{equation}
X:=\frac{1}{S+\epsilon}\text{ and }Y:=\frac{1}{M+\epsilon}, \label{tr}%
\end{equation}
where $\epsilon=0.05$ so that the two denominators be non-null. Thus, in view
of $\left(  \ref{MT}\right)  $, we have $X\leq Y,$\ which means that $X$ is
randomly right-truncated by $Y.$ Thereby, for the given sample $(T_{1}%
,M_{1}),...,(T_{n},M_{n})$, from $\left(  T,M\right)  ,$ the previous
transformations produce us a new ones $(X_{1},Y_{1}),...,(X_{n},Y_{n})$ from
$\left(  X,Y\right)  .$\medskip\

\noindent Let us now denote by $\mathbf{F}$ and $\mathbf{G}$ the df's of the
underling rv's $\mathbf{X}$ and $\mathbf{Y}$ corresponding to the truncated
rv's $X$ and $Y,$ respectively. By using parametric likelihood methods,
\cite{L86} fit both df's of $\mathbf{M}$ and $\mathbf{S}$ by the two-parameter
Weibull model, this implies that the df's of $\mathbf{F}$ and $\mathbf{G}$ by
may be fitted by two-parameter Fr\'{e}chet model, namely $\mathbf{H}_{\left(
a.r\right)  }\left(  x\right)  =\exp\left(  -a^{r}x^{-r}\right)  ,$ $x>0,$
$a>0,$ $r>0,$ hence both $\mathbf{F}$ and $\mathbf{G}$ are heavy-tailed. The
estimated parameters corresponding to the fitting of df $\mathbf{G}$ are
$a_{0}=0.004$ and $r_{0}=2.1,$ see also \cite{Lagakos88} page 520. Thus on may
consider that df $\mathbf{G}$ is known and equals $\mathbf{G}_{\theta
}=\mathbf{H}_{\left(  a_{0},r_{0}\right)  },$ where $\theta=\left(
a_{0},r_{0}\right)  .$ By using the Thomas and Reiss algorithm, given above,
we compute the optimal sample fraction $k^{\ast}$ corresponds to the tail
index estimator $\widehat{\gamma}_{1}$ of df $\mathbf{F}$ is $\gamma_{1}.$ We
find%
\begin{equation}
k^{\ast}=19,\text{ }X_{n-k:n}=0.356\text{ and }\widehat{\gamma}_{1}=0.917.
\label{v}%
\end{equation}
The well-known Weissman estimator \citep[][]{W78} of the high quantile,
$q_{v}:=\mathbf{F}^{-1}\left(  1-v_{n}\right)  ,$ corresponding to the
underling df $\mathbf{F}$ is given by%
\[
\widehat{q}_{v}:=X_{n-k:n}\left(  \frac{v}{\overline{\mathbf{F}}_{n}\left(
X_{n-k:n}\right)  }\right)  ^{-\widehat{\gamma}_{1}},
\]
where $v=1/\left(  2n\right)  ,$ and $\mathbf{F}_{n}$ is the semiparametric
estimator of df $\mathbf{F}$ of $\mathbf{X}$ given in $\left(  \ref{SCMLE}%
\right)  .$ From the values $\left(  \ref{v}\right)  ,$ we get $\widehat
{q}_{v}=0.061.$ Let us now compute the high quantile of $\mathbf{T}$ based on
the original data, $T_{1},...,T_{n}.$ Recall that $\mathbf{P}\left(
\mathbf{X}\geq q_{v}\right)  =v$ and $\mathbf{X=1/}\left(  8-\mathbf{T}%
+\epsilon\right)  ,$ this implies that $\mathbf{P}\left(  \mathbf{T}%
\geq1/q_{v}-8+\epsilon\right)  =v,$ this means that $1/q_{v}-8+\epsilon$ is
the high quantile of $\mathbf{T},$ which corresponds to the end-time $t_{end}$
that we want to estimate. Thereby $\widehat{t}_{end}=1/\widehat{q}%
_{v}-8+10^{-2}=1/0.061-8+10^{-2}=8.40,$ the value the end time of induction of
AIDS is: $8$ years, $4$ months and $24$ days.

\section{\textbf{Proof of Theorems\label{sec5}}}

\subsection{Proof of Theorem \ref{Theorem1}.}

Let us first notice that the semiparametric estimator of df $\mathbf{F}$ given
in $\left(  \ref{CLME}\right)  $ may be rewritten into%
\begin{equation}
\mathbf{F}_{n}\left(  x;\widehat{\theta}_{n}\right)  =P_{n}\left(
\widehat{\theta}\right)  \int_{0}^{x}\frac{dF_{n}^{\ast}\left(  w\right)
}{\overline{\mathbf{G}}_{\widehat{\theta}}\left(  w\right)  }, \label{Fn-rep}%
\end{equation}
and $1/P_{n}\left(  \widehat{\theta}\right)  =\int_{0}^{\infty}dF_{n}^{\ast
}\left(  w\right)  /\overline{\mathbf{G}}_{\widehat{\theta}}\left(  w\right)
,$ where $F_{n}^{\ast}\left(  w\right)  :=n^{-1}\sum_{i=1}^{n}\mathbf{1}%
\left(  X_{i}\leq w\right)  $ denotes the usual empirical df pertaining to the
observed sample $X_{1},...,X_{n}.$ It is worth mentioning that by using the
strong law of large numbers $P_{n}\left(  \widehat{\theta}\right)  \rightarrow
P\left(  \theta\right)  $ (almost surely) as $n\rightarrow\infty,$ where
$P\left(  \theta\right)  =1/\int_{0}^{\infty}dF^{\ast}\left(  w\right)
/\overline{\mathbf{G}}_{\theta}\left(  w\right)  $ (see e.g. Lemma 3.2 in
\cite{Wang89}). On the other hand by using the first equation in $\left(
\ref{F-G}\right)  ,$ we deduce that $p=1/\int_{0}^{\infty}dF^{\ast}\left(
w\right)  /\overline{\mathbf{G}}\left(  w\right)  ,$ it follows that $p\equiv
P\left(  \theta\right)  $ because we already assumed that $\mathbf{G\equiv
G}_{\theta}.$ Next we use the distribution tail
\begin{equation}
\overline{\mathbf{F}}\left(  x;\theta\right)  =P\left(  \theta\right)
\int_{x}^{\infty}\frac{dF^{\ast}\left(  w\right)  }{\overline{\mathbf{G}%
}_{\theta}\left(  w\right)  }, \label{F-rep}%
\end{equation}
and its empirical counterpart%
\[
\overline{\mathbf{F}}_{n}\left(  x;\widehat{\theta}\right)  =P_{n}\left(
\widehat{\theta}\right)  \int_{x}^{\infty}\frac{dF_{n}^{\ast}\left(  w\right)
}{\overline{\mathbf{G}}_{\widehat{\theta}}\left(  w\right)  }.
\]
To begin let us decompose $k^{-1/2}\mathbf{D}_{n}\left(  x;\widehat{\theta
}\right)  $ for $x>1$ into the sum of%
\[
\mathbf{M}_{n1}\left(  x\right)  :=x^{-1/\gamma_{1}}\frac{\overline
{\mathbf{F}}_{n}\left(  xX_{n-k:n};\widehat{\theta}\right)  -\overline
{\mathbf{F}}_{n}\left(  xX_{n-k:n};\theta\right)  }{\overline{\mathbf{F}%
}\left(  xX_{n-k:n};\theta\right)  },
\]%
\[
\mathbf{M}_{n2}\left(  x\right)  :=x^{-1/\gamma_{1}}\frac{\overline
{\mathbf{F}}_{n}\left(  xX_{n-k:n};\theta\right)  -\overline{\mathbf{F}%
}\left(  xX_{n-k:n};\theta\right)  }{\overline{\mathbf{F}}\left(
xX_{n-k:n};\theta\right)  },
\]%
\[
\mathbf{M}_{n3}\left(  x\right)  :=-\frac{\overline{\mathbf{F}}\left(
xX_{n-k:n};\theta\right)  }{\overline{\mathbf{F}}_{n}\left(  X_{n-k:n}%
;\theta\right)  }\frac{\overline{\mathbf{F}}_{n}\left(  X_{n-k:n}%
;\theta\right)  -\overline{\mathbf{F}}\left(  X_{n-k:n};\theta\right)
}{\overline{\mathbf{F}}\left(  X_{n-k:n};\theta\right)  },
\]%
\[
\mathbf{M}_{n4}\left(  x\right)  :=\left(  \frac{\overline{\mathbf{F}}\left(
xX_{n-k:n};\theta\right)  }{\overline{\mathbf{F}}_{n}\left(  X_{n-k:n}%
;\theta\right)  }-x^{-1/\gamma_{1}}\right)  \frac{\overline{\mathbf{F}}%
_{n}\left(  xX_{n-k:n};\theta\right)  -\overline{\mathbf{F}}\left(
xX_{n-k:n};\theta\right)  }{\overline{\mathbf{F}}\left(  xX_{n-k:n}%
;\theta\right)  }%
\]
and%
\[
\mathbf{M}_{n5}\left(  x\right)  :=\frac{\overline{\mathbf{F}}\left(
xX_{n-k:n};\theta\right)  }{\overline{\mathbf{F}}\left(  X_{n-k:n}%
;\theta\right)  }-x^{-1/\gamma_{1}}.
\]
Our goal is to provide a weighted weak approximation to the tail empirical
process $\mathbf{D}_{n}\left(  x;\widehat{\theta};\gamma_{1}\right)  .$ To
begin, let $\xi_{i}:=\overline{F}^{\ast}\left(  X_{i}\right)  ,$ $i=1,...,n$
be a sequence of independent and identically rv's. Recall that both df's
$\mathbf{F}$ and $\mathbf{G}_{\theta}$ are assumed to be continuous, this
implies that $F^{\ast}$ is continuous as well, therefore $\mathbf{P}\left(
\xi_{i}\leq u\right)  =u,$ this means that $\left(  \xi_{i}\right)  _{i=1,n}$
are uniformly distributed on $\left(  0,1\right)  .$ Let us now define the
corresponding uniform tail empirical process
\begin{equation}
\alpha_{n}\left(  s\right)  :=\sqrt{k}\left(  \mathbf{U}_{n}\left(  s\right)
-s\right)  ,\text{ for }0\leq s\leq1, \label{alfan}%
\end{equation}
where
\begin{equation}
\mathbf{U}_{n}\left(  s\right)  :=k^{-1}\sum_{i=1}^{n}\mathbf{1}\left(
\xi_{i}<ks/n\right)  \label{tail}%
\end{equation}
denotes the tail empirical df pertaining to the sample $\left(  \xi
_{i}\right)  _{i=1,n}.$ In view of Proposition 3.1 of \cite{Einmahl2006},
there exists a Wiener process $W$ such that for every $0\leq\epsilon<1/2,$
\begin{equation}
\sup_{0\leq s<1}s^{-\epsilon}\left\vert \alpha_{n}\left(  s\right)  -W\left(
s\right)  \right\vert \overset{\mathbf{P}}{\rightarrow}0,\text{ as
}n\rightarrow\infty. \label{weak-alfa}%
\end{equation}
Let us fix a sufficiently small $0<\epsilon<1/2.$ We will successively show
that, under the first-order of regular variation conditions $\left(
\ref{rv1}\right)  $, uniformly on $x\geq1,$ for all large $n:$%
\begin{equation}
\sqrt{k}\mathbf{M}_{n2}\left(  x\right)  =\frac{\gamma}{\gamma_{1}}%
x^{1/\gamma_{2}}W\left(  t^{-1/\gamma}\right)  +\frac{\gamma}{\gamma_{1}}%
\int_{x^{1/\gamma_{2}}}^{\infty}W\left(  t^{-\gamma_{2}/\gamma}\right)
dt+o_{\mathbf{P}}\left(  x^{\frac{1}{2}\left(  \frac{1}{\gamma_{2}}-\frac
{1}{\gamma_{1}}\right)  +\epsilon}\right)  , \label{approx-M2}%
\end{equation}
and%
\begin{equation}
\sqrt{k}\mathbf{M}_{n3}\left(  x\right)  =-x^{-1/\gamma_{1}}\left(
\frac{\gamma}{\gamma_{1}}W\left(  1\right)  +\frac{\gamma}{\gamma_{1}}\int
_{1}^{\infty}W\left(  t^{-\gamma_{2}/\gamma}\right)  dt\right)  +o_{\mathbf{P}%
}\left(  x^{-1/\gamma_{1}+\epsilon}\right)  . \label{approx-M3}%
\end{equation}
while%
\begin{equation}
\sqrt{k}\mathbf{M}_{n1}\left(  x\right)  =o_{\mathbf{P}}\left(  x^{-1/\gamma
_{1}+\epsilon}\right)  ,\text{ }\sqrt{k}\mathbf{M}_{n4}\left(  x\right)
=o_{\mathbf{P}}\left(  x^{\frac{1}{2}\left(  \frac{1}{\gamma_{2}}-\frac
{1}{\gamma_{1}}\right)  +\epsilon}\right)  ,\text{ } \label{res-M14}%
\end{equation}
and%
\begin{equation}
\sqrt{k}\mathbf{M}_{n5}\left(  x\right)  =x^{-1/\gamma_{1}}\dfrac{x^{\rho
_{1}/\gamma_{1}}-1}{\rho_{1}\gamma_{1}}\sqrt{k}\mathbf{A}\left(  a_{k}\right)
+o_{\mathbf{P}}\left(  x^{-1/\gamma_{1}}\right)  . \label{res-M5}%
\end{equation}
Throughout the proof, without loss of generality, we assume that
$a\epsilon\equiv\epsilon,$ for any constant $a>0.$ We point out that all the
rest terms of the previous approximations are negligible in probability,
uniformly on $x>1.$ Let us begin by the term\textbf{\ }$\mathbf{M}_{n1}\left(
x\right)  $ which may be made into%
\begin{align*}
&  \frac{x^{-1/\gamma_{1}}}{\overline{\mathbf{F}}\left(  xX_{n-k:n}%
;\theta\right)  }P_{n}\left(  \widehat{\theta}\right)  \left(  \int
_{x}^{\infty}\frac{dF_{n}^{\ast}\left(  X_{n-k:n}w\right)  }{\overline
{\mathbf{G}}_{\widehat{\theta}}\left(  X_{n-k:n}w\right)  }-\int_{x}^{\infty
}\frac{dF_{n}^{\ast}\left(  X_{n-k:n}w\right)  }{\overline{\mathbf{G}}%
_{\theta}\left(  X_{n-k:n}w\right)  }\right) \\
&  =\frac{x^{-1/\gamma_{1}}}{\overline{\mathbf{F}}\left(  xX_{n-k:n}%
;\theta\right)  }P_{n}\left(  \widehat{\theta}\right)  \int_{x}^{\infty
}\left(  \frac{1}{\overline{\mathbf{G}}_{\widehat{\theta}}\left(
X_{n-k:n}w\right)  }-\frac{1}{\overline{\mathbf{G}}_{\theta}\left(
X_{n-k:n}w\right)  }\right)  dF_{n}^{\ast}\left(  X_{n-k:n}w\right)  .
\end{align*}
By applying the mean value theorem (for several variables) to function
$\theta\rightarrow1/\overline{\mathbf{G}}_{\theta}\left(  \cdot\right)  ,$
yields%
\[
\frac{1}{\overline{\mathbf{G}}_{\widehat{\theta}}\left(  z\right)  }-\frac
{1}{\overline{\mathbf{G}}_{\theta}\left(  z\right)  }=\sum_{i=1}^{d}\left(
\widehat{\theta}_{i}-\theta_{i}\right)  \frac{\overline{\mathbf{G}%
}_{\widetilde{\theta}}^{\left(  i\right)  }\left(  z\right)  }{\overline
{\mathbf{G}}_{\widetilde{\theta}}^{2}\left(  z\right)  },\text{ for any }z>1,
\]
where $\widetilde{\theta}$ is such that $\widetilde{\theta}_{i}$ is between
$\theta_{i}$ and $\widehat{\theta}_{i},$ for $i=1,...,d,$ therefore%
\[
\mathbf{M}_{n1}\left(  x\right)  =\frac{x^{-1/\gamma_{1}}}{\overline
{\mathbf{F}}\left(  xX_{n-k:n};\theta\right)  }P_{n}\left(  \widehat{\theta
}\right)  \sum_{i=1}^{d}\left(  \widehat{\theta}_{i}-\theta_{i}\right)
\int_{x}^{\infty}\frac{\overline{\mathbf{G}}_{\widetilde{\theta}}^{\left(
i\right)  }\left(  X_{n-k:n}w\right)  }{\overline{\mathbf{G}}_{\widetilde
{\theta}}^{2}\left(  X_{n-k:n}w\right)  }dF_{n}^{\ast}\left(  X_{n-k:n}%
w\right)  .
\]
Recall that by assumptions $\left(  \ref{rv1}\right)  $ and $\left[
A2\right]  $ both $\overline{\mathbf{G}}_{\theta}$ and $\overline{\mathbf{G}%
}_{\theta}^{\left(  i\right)  }$ are regularly varying with the same index
$\left(  -1/\gamma_{2}\right)  $ and on the other hand, $X_{n-k:n}%
\overset{\mathbf{P}}{\rightarrow}\infty$ and $w>1,$ then $X_{n-k:n}%
w\overset{\mathbf{P}}{\rightarrow}\infty.$ Then by applying Pooter's
inequalities $\left(  \ref{pooter}\right)  ,$ yields%
\[
\frac{\overline{\mathbf{G}}_{\widetilde{\theta}}\left(  X_{n-k:n}w\right)
}{\overline{\mathbf{G}}_{\widetilde{\theta}}\left(  X_{n-k:n}\right)
}=\left(  1+o_{\mathbf{P}}\left(  1\right)  \right)  w^{-1/\gamma_{2}%
+\epsilon}=\frac{\overline{\mathbf{G}}_{\widetilde{\theta}}^{\left(  i\right)
}\left(  X_{n-k:n}w\right)  }{\overline{\mathbf{G}}_{\widetilde{\theta}%
}^{\left(  i\right)  }\left(  X_{n-k:n}\right)  },
\]
it follows that%
\begin{align*}
\mathbf{M}_{n1}\left(  x\right)   &  =\left(  1+o_{\mathbf{P}}\left(
1\right)  \right)  P_{n}\left(  \widehat{\theta}\right)  \frac{x^{-1/\gamma
_{1}}}{\overline{\mathbf{G}}_{\widetilde{\theta}}\left(  X_{n-k:n}\right)
\overline{\mathbf{F}}\left(  xX_{n-k:n};\theta\right)  }\\
&  \times\sum_{i=1}^{d}\frac{\overline{\mathbf{G}}_{\widetilde{\theta}%
}^{\left(  i\right)  }\left(  X_{n-k:n}\right)  }{\overline{\mathbf{G}%
}_{\widetilde{\theta}}\left(  X_{n-k:n}\right)  }\left\vert \widehat{\theta
}_{i}-\theta_{i}\right\vert \int_{x}^{\infty}w^{1/\gamma_{2}-\epsilon}%
dF_{n}^{\ast}\left(  X_{n-k:n}w\right)  .
\end{align*}
For some regularity assumptions, \cite{Anders70} stated that $\sqrt{n}\left(
\widehat{\theta}-\theta\right)  $ is asymptotically a centred multivariate
normal rv, which implies that $\widehat{\theta}_{i}-\theta_{i}=O_{\mathbf{P}%
}\left(  n^{-1/2}\right)  $ thus $\widehat{\theta}\overset{\mathbf{P}%
}{\rightarrow}\theta.$ \ On the other hand, by the law of large numbers
$P_{n}\left(  \theta\right)  \overset{\mathbf{P}}{\rightarrow}P\left(
\theta\right)  $ as $n\rightarrow\infty,$ then we may readily show that
$P_{n}\left(  \widehat{\theta}\right)  \overset{\mathbf{P}}{\rightarrow
}P\left(  \theta\right)  $ as $n\rightarrow\infty$ as well. Note since
$\widehat{\theta}$ is consistent estimator for $\theta$ then $\widetilde
{\theta}$ it is, then by using the fact that $X_{n-k:n}\overset{\mathbf{P}%
}{\rightarrow}\infty,$ and the two assumptions $\left[  A1\right]  $ and
$\left[  A3\right]  $ together, we show readily that%
\[
\left(  X_{n-k:n}\right)  ^{-\epsilon}\frac{\overline{\mathbf{G}}%
_{\widetilde{\theta}}^{\left(  i\right)  }\left(  X_{n-k:n}\right)
}{\overline{\mathbf{G}}_{\widetilde{\theta}}\left(  X_{n-k:n}\right)
}\overset{\mathbf{P}}{\rightarrow}0,\text{ as }n\rightarrow\infty,
\]
and $\overline{\mathbf{G}}_{\theta}\left(  X_{n-k:n}\right)  /\overline
{\mathbf{G}}_{\widetilde{\theta}}\left(  X_{n-k:n}\right)  \overset
{\mathbf{P}}{\rightarrow}1.$ In view of Lemma A1 in \cite{BchMN-16a}, we infer
that $X_{n-k:n}=\left(  1+o_{\mathbf{P}}\left(  1\right)  \right)  \left(
k/n\right)  ^{-\gamma},$ thus
\[
\mathbf{M}_{n1}\left(  x\right)  =\left(  k/n\right)  ^{-\epsilon\gamma
}o_{\mathbf{P}}\left(  n^{-1/2}\right)  \widetilde{\mathbf{M}}_{n1}\left(
x\right)  ,
\]
where%
\[
\widetilde{\mathbf{M}}_{n1}\left(  x\right)  :=\frac{x^{-1/\gamma_{1}}P\left(
\theta\right)  }{\overline{\mathbf{G}}_{\theta}\left(  X_{n-k:n}\right)
\overline{\mathbf{F}}\left(  xX_{n-k:n};\theta\right)  }\int_{x}^{\infty
}w^{1/\gamma_{2}-\epsilon}dF_{n}^{\ast}\left(  X_{n-k:n}w\right)  .
\]
Making use of representation $\left(  \ref{F-rep}\right)  ,$ we write
\begin{align}
\widetilde{\mathbf{M}}_{n1}\left(  x\right)   &  =x^{-1/\gamma_{1}}\left(
\int_{x}^{\infty}\frac{\overline{\mathbf{G}}_{\theta}\left(  X_{n-k:n}\right)
}{\overline{\mathbf{G}}_{\theta}\left(  X_{n-k:n}w\right)  }d\frac{F^{\ast
}\left(  X_{n-k:n}w\right)  }{\overline{F}^{\ast}\left(  X_{n-k:n}\right)
}\right)  ^{-1}\label{M1-tild}\\
&  \times\left(  \int_{x}^{\infty}w^{1/\gamma_{2}-\epsilon}d\frac{F_{n}^{\ast
}\left(  X_{n-k:n}w\right)  }{\overline{F}^{\ast}\left(  X_{n-k:n}\right)
}\right)  .\nonumber
\end{align}
Once again by using the routine manipulations of Potter's inequalities, we
show that the first quantity between two brackets is%
\[
\left(  1+o_{\mathbf{P}}\left(  1\right)  \right)  \int_{x}^{\infty
}w^{1/\gamma_{2}+\epsilon/2}d\frac{F^{\ast}\left(  X_{n-k:n}w\right)
}{\overline{F}^{\ast}\left(  X_{n-k:n}w\right)  }.
\]
By using an integration by parts to the previous integral yields%
\[
w^{1/\gamma_{2}+\epsilon/2}\frac{\overline{F}^{\ast}\left(  X_{n-k:n}x\right)
}{\overline{F}^{\ast}\left(  X_{n-k:n}x\right)  }+\left(  1/\gamma
_{2}+\epsilon/2\right)  \int_{x}^{\infty}w^{1/\gamma_{2}+\epsilon/2-1}%
\frac{\overline{F}^{\ast}\left(  X_{n-k:n}x\right)  }{\overline{F}^{\ast
}\left(  X_{n-k:n}x\right)  }dw.
\]
Recall that from$\ \left(  \ref{F-G-stars}\right)  $ we have $\overline
{F}^{\ast}\in\mathcal{RV}_{\left(  -1/\gamma\right)  },$ then
\[
\frac{\overline{F}^{\ast}\left(  X_{n-k:n}w\right)  }{\overline{F}^{\ast
}\left(  X_{n-k:n}w\right)  }=\left(  1+o_{\mathbf{P}}\left(  1\right)
\right)  w^{-1/\gamma+\epsilon/2},
\]
uniformly on $w>1.$ Therefore the previous quantity reduces into
\[
\left(  1+o_{\mathbf{P}}\left(  1\right)  \right)  \left(  1+\frac
{1/\gamma_{2}+\epsilon/2}{-1/\gamma_{1}+\epsilon}\right)  x^{-1/\gamma
_{1}+\epsilon}.
\]
Thereby the first expression between two brackets in $\left(  \ref{M1-tild}%
\right)  $ equals $O_{\mathbf{P}}\left(  x^{1/\gamma_{1}-\epsilon}\right)  .$
Let us consider the second factor in $\left(  \ref{M1-tild}\right)  .$ By
similar arguments as used for the first factor, we show that%
\[
x^{1/\gamma_{2}+\epsilon/2}\frac{\overline{F}_{n}^{\ast}\left(  X_{n-k:n}%
x\right)  }{\overline{F}^{\ast}\left(  X_{n-k:n}x\right)  }+\left(
1/\gamma_{2}+\epsilon/2\right)  \int_{x}^{\infty}w^{1/\gamma_{2}+\epsilon
/2}\frac{\overline{F}_{n}^{\ast}\left(  X_{n-k:n}x\right)  }{\overline
{F}^{\ast}\left(  X_{n-k:n}x\right)  }dw,
\]
multiplied by $\left(  1+o_{\mathbf{P}}\left(  1\right)  \right)  ,$ uniformly
on $x>1.$ From Lemma $\ref{Lemma1}$\textbf{, }we have%
\[
\frac{\overline{F}_{n}^{\ast}\left(  X_{n-k:n}w\right)  }{\overline{F}^{\ast
}\left(  X_{n-k:n}\right)  }=O_{\mathbf{P}}\left(  w^{-1/\gamma+\epsilon
/2}\right)  ,
\]
which implies that the previous expression equals $O_{\mathbf{P}}\left(
x^{-1/\gamma_{1}+\epsilon}\right)  ,$ thus $\widetilde{\mathbf{M}}_{n1}\left(
x\right)  =O_{\mathbf{P}}\left(  w^{-1/\gamma+\epsilon}\right)  $ and
therefore
\[
\sqrt{k}\mathbf{M}_{n1}\left(  x\right)  =\left(  k/n\right)  ^{1/2-\epsilon
\gamma}O_{\mathbf{P}}\left(  w^{-1/\gamma_{1}+\epsilon}\right)  .
\]
By assumption $k/n\rightarrow0,$ it follows that $\sqrt{k}\mathbf{M}%
_{n1}\left(  x\right)  =o_{\mathbf{P}}\left(  x^{-1/\gamma_{1}+\epsilon
}\right)  $ which meets the result of $\left(  \ref{M1-tild}\right)  .$ Let
now consider the second term $\mathbf{M}_{n2}\left(  x\right)  $ which may be
rewritten into%
\begin{align*}
&  -x^{-1/\gamma_{1}}\frac{k/n}{\overline{F}^{\ast}\left(  X_{n-k:n}\right)
}\frac{\overline{\mathbf{F}}\left(  X_{n-k:n};\theta\right)  }{\overline
{\mathbf{F}}\left(  xX_{n-k:n};\theta\right)  }\frac{\overline{\mathbf{G}%
}_{\theta}\left(  X_{n-k:n}\right)  /\overline{F}^{\ast}\left(  X_{n-k:n}%
\right)  }{\overline{\mathbf{F}}\left(  X_{n-k:n};\theta\right)  }\\
&  \times\int_{x}^{\infty}\frac{\overline{\mathbf{G}}_{\theta}\left(
X_{n-k:n}\right)  }{\overline{\mathbf{G}}_{\theta}\left(  X_{n-k:n}w\right)
}d\frac{\overline{F}_{n}^{\ast}\left(  X_{n-k:n}w\right)  -\overline{F}^{\ast
}\left(  X_{n-k:n}w\right)  }{k/n}.
\end{align*}
In view of Potter's inequalities, it is clear that%
\[
\frac{\overline{\mathbf{F}}\left(  X_{n-k:n};\theta\right)  }{\overline
{F}^{\ast}\left(  X_{n-k:n}\right)  /\overline{\mathbf{G}}_{\theta}\left(
X_{n-k:n}\right)  }\overset{\mathbf{P}}{\rightarrow}\frac{\gamma_{1}}{\gamma
}P\left(  \theta\right)
\]
and%
\[
\frac{\overline{\mathbf{F}}\left(  X_{n-k:n};\theta\right)  }{\overline
{\mathbf{F}}\left(  xX_{n-k:n};\theta\right)  }\overset{\mathbf{P}%
}{\rightarrow}x^{1/\gamma_{1}}.
\]
Note that $\overline{F}^{\ast}\left(  X_{n-k:n}\right)  \overset{d}{=}%
\xi_{k+1:n}$ and Smirnov's lemma (see, e.g., Lemma 2.2.3 in de Haan and
Feriera, 2006) implies that $\frac{n}{k}\xi_{k+1:n}\overset{\mathbf{P}%
}{\rightarrow}1,$ hence $\frac{n}{k}\overline{F}^{\ast}\left(  X_{n-k:n}%
\right)  =1+o_{\mathbf{P}}\left(  1\right)  .$ Therefore
\[
\mathbf{M}_{n2}\left(  x\right)  =-\left(  1+o_{\mathbf{P}}\left(  1\right)
\right)  \frac{\gamma}{\gamma_{1}}\int_{x}^{\infty}\frac{\overline{\mathbf{G}%
}_{\theta}\left(  X_{n-k:n}\right)  }{\overline{\mathbf{G}}_{\theta}\left(
X_{n-k:n}w\right)  }d\frac{\overline{F}_{n}^{\ast}\left(  X_{n-k:n}w\right)
-\overline{F}^{\ast}\left(  X_{n-k:n}w\right)  }{k/n}.
\]
On the other hand, by using an integration by parts yields%
\[
\mathbf{M}_{n2}\left(  x\right)  =\left(  1+o_{\mathbf{P}}\left(  1\right)
\right)  \frac{\gamma_{1}}{\gamma}\left(  \mathbf{M}_{n2}^{\left(  1\right)
}\left(  x\right)  +\mathbf{M}_{n2}^{\left(  2\right)  }\left(  x\right)
\right)  ,
\]
where%
\[
\mathbf{M}_{n2}^{\left(  1\right)  }\left(  x\right)  :=\int_{x}^{\infty}%
\frac{\overline{F}_{n}^{\ast}\left(  X_{n-k:n}w;\theta\right)  -\overline
{F}^{\ast}\left(  X_{n-k:n}w;\theta\right)  }{k/n}d\frac{\overline{\mathbf{G}%
}_{\theta}\left(  X_{n-k:n}\right)  }{\overline{\mathbf{G}}_{\theta}\left(
X_{n-k:n}w\right)  }%
\]
and
\[
\mathbf{M}_{n2}^{\left(  2\right)  }\left(  x\right)  :=\frac{\overline
{\mathbf{G}}_{\theta}\left(  X_{n-k:n}\right)  }{\overline{\mathbf{G}}%
_{\theta}\left(  X_{n-k:n}x\right)  }\frac{\overline{F}_{n}^{\ast}\left(
X_{n-k:n}x;\theta\right)  -\overline{F}^{\ast}\left(  xX_{n-k:n}%
;\theta\right)  }{k/n}.
\]
By using the change of variables $t=\overline{\mathbf{G}}_{\theta}\left(
X_{n-k:n}\right)  /\overline{\mathbf{G}}_{\theta}\left(  X_{n-k:n}w\right)  ,$
it is easy to verify that%
\[
\mathbf{M}_{n2}^{\left(  1\right)  }\left(  x\right)  =\int_{\frac
{\overline{\mathbf{G}}_{\theta}\left(  X_{n-k:n}\right)  }{\overline
{\mathbf{G}}_{\theta}\left(  X_{n-k:n}x\right)  }}^{\infty}\frac{n}{k}\left(
\overline{F}_{n}^{\ast}\left(  \vartheta_{n}\left(  t;\theta\right)  \right)
-\overline{F}^{\ast}\left(  \vartheta_{n}\left(  t;\theta\right)  \right)
\right)  dt,
\]
where $\vartheta_{n}\left(  t;\theta\right)  :=\frac{n}{k}\overline{F}^{\ast
}\left(  \mathbf{G}_{\theta}^{\leftarrow}\left(  1-\overline{\mathbf{G}%
}_{\theta}\left(  X_{n-k:n}\right)  t^{-1}\right)  \right)  .$ Observe that%
\[
\mathbf{M}_{n2}^{\left(  1\right)  }\left(  x\right)  =\int_{\frac
{\overline{\mathbf{G}}_{\theta}\left(  X_{n-k:n}\right)  }{\overline
{\mathbf{G}}_{\theta}\left(  X_{n-k:n}x\right)  }}^{\infty}\left(
\mathbf{U}_{n}\left(  \vartheta_{n}\left(  t;\theta\right)  \right)
-\vartheta_{n}\left(  t;\theta\right)  \right)  dt,
\]
where $\mathbf{U}_{n}$ being the tail empirical df given in $\left(
\ref{tail}\right)  ,$ thereby%
\[
\sqrt{k}\mathbf{M}_{n2}^{\left(  1\right)  }\left(  x\right)  =\int
_{\frac{\overline{\mathbf{G}}_{\theta}\left(  X_{n-k:n}\right)  }%
{\overline{\mathbf{G}}_{\theta}\left(  X_{n-k:n}x\right)  }}^{\infty}%
\alpha_{n}\left(  \vartheta_{n}\left(  t;\theta\right)  \right)  dt,
\]
where $\alpha_{n}$ being the tail empirical process defined in $\left(
\ref{alfan}\right)  .$ Let us decompose the previous integral into%
\begin{align*}
&  \int_{\frac{\overline{\mathbf{G}}_{\theta}\left(  X_{n-k:n}\right)
}{\overline{\mathbf{G}}_{\theta}\left(  X_{n-k:n}x\right)  }}^{\infty}\left(
\alpha_{n}\left(  \vartheta_{n}\left(  t;\theta\right)  \right)  -W\left(
\vartheta_{n}\left(  t;\theta\right)  \right)  \right)  dt+\int_{\frac
{\overline{\mathbf{G}}_{\theta}\left(  X_{n-k:n}\right)  }{\overline
{\mathbf{G}}_{\theta}\left(  X_{n-k:n}x\right)  }}^{\infty}W\left(
\vartheta_{n}\left(  t;\theta\right)  \right)  dt\\
&  =S_{n}+R_{n}.
\end{align*}
By applying weak approximation $\left(  \ref{weak-alfa}\right)  $ we get
\[
S_{n}=o_{\mathbf{P}}\left(  1\right)  \int_{\frac{\overline{\mathbf{G}%
}_{\theta}\left(  X_{n-k:n}\right)  }{\overline{\mathbf{G}}_{\theta}\left(
X_{n-k:n}x\right)  }}^{\infty}\left(  \vartheta_{n}\left(  t;\theta\right)
\right)  ^{1/2-\epsilon}dt.
\]
Observe that $\overline{F}^{\ast}\left(  \mathbf{G}_{\theta}^{\longleftarrow
}\left(  1-\overline{\mathbf{G}}_{\theta}\left(  X_{n-k:n}\right)  \right)
\right)  =\overline{F}^{\ast}\left(  X_{n-k:n}\right)  ,$ thereby%
\[
\vartheta_{n}\left(  t;\theta\right)  =\frac{n}{k}\overline{F}^{\ast}\left(
X_{n-k:n}\right)  \frac{\overline{F}^{\ast}\left(  \mathbf{G}_{\theta
}^{\longleftarrow}\left(  1-\overline{\mathbf{G}}_{\theta}\left(
X_{n-k:n}\right)  t^{-1}\right)  \right)  }{\overline{F}^{\ast}\left(
\mathbf{G}_{\theta}^{\longleftarrow}\left(  1-\overline{\mathbf{G}}_{\theta
}\left(  X_{n-k:n}\right)  \right)  \right)  }.
\]
It is easy to check that $\overline{F}^{\ast}\left(  \mathbf{G}_{\theta
}^{\longleftarrow}\left(  1-\cdot\right)  \right)  \in\mathcal{RV}\left(
\gamma_{2}/\gamma\right)  ,$ then once again by means of Pooter's inequality,
we show that $\vartheta_{n}\left(  t;\theta\right)  =\left(  1+o_{\mathbf{P}%
}\left(  1\right)  \right)  t^{-\gamma_{2}/\gamma+\epsilon},$ therefore%
\[
S_{n}=o_{\mathbf{P}}\left(  1\right)  \int_{\frac{\overline{\mathbf{G}%
}_{\theta}\left(  X_{n-k:n}\right)  }{\overline{\mathbf{G}}_{\theta}\left(
X_{n-k:n}x\right)  }}^{\infty}\left(  t^{-\gamma_{2}/\gamma+\epsilon}\right)
^{1/2-\epsilon}dt.
\]
By using an elementary integration we get
\[
S_{n}=o_{\mathbf{P}}\left(  1\right)  \left(  \frac{\overline{\mathbf{G}%
}_{\theta}\left(  X_{n-k:n}\right)  }{\overline{\mathbf{G}}_{\theta}\left(
X_{n-k:n}x\right)  }\right)  ^{\left(  -\gamma_{2}/\gamma+\epsilon\right)
\left(  1/2-\epsilon\right)  +1}=o_{\mathbf{P}}\left(  x^{\frac{1}{\gamma_{2}%
}-\frac{1}{2\gamma}+\epsilon}\right)  .
\]
By replacing $\gamma$ by its by its expression given in $\left(
\ref{gamma-ratio}\right)  ,$ we end up with
\[
S_{n}=o_{\mathbf{P}}\left(  x^{\frac{1}{2}\left(  \frac{1}{\gamma_{2}}%
-\frac{1}{\gamma_{1}}\right)  +\epsilon}\right)  .
\]
The term $R_{n}$ may be decomposed into%
\[
\int_{\frac{\overline{\mathbf{G}}_{\theta}\left(  X_{n-k:n}\right)
}{\overline{\mathbf{G}}_{\theta}\left(  X_{n-k:n}x\right)  }}^{x^{1/\gamma
_{2}}}W\left(  \vartheta_{n}\left(  t;\theta\right)  \right)  dt+\int
_{x^{1/\gamma_{2}}}^{\infty}W\left(  \vartheta_{n}\left(  t;\theta\right)
\right)  dt=R_{n1}+R_{n2}.
\]
It is clear that%
\[
\left\vert R_{n1}\right\vert <\left\{  \sup_{t>\frac{\overline{\mathbf{G}%
}_{\theta}\left(  X_{n-k:n}\right)  }{\overline{\mathbf{G}}_{\theta}\left(
X_{n-k:n}x\right)  }}\frac{\left\vert W\left(  \vartheta_{n}\left(
t;\theta\right)  \right)  \right\vert }{\left(  \vartheta_{n}\left(
t;\theta\right)  \right)  ^{\epsilon}}\right\}  \int_{\frac{\overline
{\mathbf{G}}_{\theta}\left(  X_{n-k:n}\right)  }{\overline{\mathbf{G}}%
_{\theta}\left(  X_{n-k:n}x\right)  }}^{x^{1/\gamma_{2}}}\left(  \vartheta
_{n}\left(  t;\theta\right)  \right)  ^{\epsilon}dt.
\]
It is ready to check, by using the change of variables $\vartheta_{n}\left(
t;\theta\right)  =s,$ that the previous first factor between the curly
brackets equals%
\[
\sup_{0<s<\frac{n}{k}\overline{F}^{\ast}\left(  X_{n-k:n}x;\theta\right)
}\frac{\left\vert W\left(  s\right)  \right\vert }{s^{\epsilon}}%
<\sup_{0<s<\frac{n}{k}\overline{F}^{\ast}\left(  X_{n-k:n};\theta\right)
}\frac{\left\vert W\left(  s\right)  \right\vert }{s^{\epsilon}}.
\]
From Lemma 3.2 in \cite{Einmahl2006}\ $\sup_{0<s\leq1}s^{-\delta}\left\vert
W\left(  s\right)  \right\vert =O_{\mathbf{P}}\left(  1\right)  ,$ for any
$0<\delta<1/2,$ then since $n\overline{F}^{\ast}\left(  X_{n-k:n}%
;\theta\right)  /k\overset{\mathbf{P}}{\rightarrow}1,$ as $n\rightarrow
\infty,$ we infer that
\[
\sup_{0<s<\frac{n}{k}\overline{F}^{\ast}\left(  X_{n-k:n};\theta\right)
}s^{-\epsilon}\left\vert W\left(  s\right)  \right\vert =O_{\mathbf{P}}\left(
1\right)  .
\]
for all large $n.$ On the other hand, we already pointed out above that
\[
\vartheta_{n}\left(  t;\theta\right)  =\left(  1+o_{\mathbf{P}}\left(
1\right)  \right)  t^{-\gamma_{2}/\gamma+\epsilon},
\]
which implies that the second factor is equal to%
\[
O_{\mathbf{P}}\left(  1\right)  \int_{\frac{\overline{\mathbf{G}}_{\theta
}\left(  X_{n-k:n}\right)  }{\overline{\mathbf{G}}_{\theta}\left(
X_{n-k:n}x\right)  }}^{x^{1/\gamma_{2}}}\left(  t^{-\gamma_{2}/\gamma
+\epsilon}\right)  ^{\epsilon}dt=O_{\mathbf{P}}\left(  1\right)  \int
_{\frac{\overline{\mathbf{G}}_{\theta}\left(  X_{n-k:n}\right)  }%
{\overline{\mathbf{G}}_{\theta}\left(  X_{n-k:n}x\right)  }}^{x^{1/\gamma_{2}%
}}t^{-\epsilon\gamma_{2}/\gamma+\epsilon}dt,
\]
which after integration yields%
\[
O_{\mathbf{P}}\left(  1\right)  \left\{  \left(  \frac{\overline{\mathbf{G}%
}_{\theta}\left(  X_{n-k:n}\right)  }{\overline{\mathbf{G}}_{\theta}\left(
X_{n-k:n}x\right)  }\right)  ^{-\epsilon\gamma_{2}/\gamma+\epsilon+1}-\left(
x^{-1/\gamma}\right)  ^{-\epsilon\gamma_{2}/\gamma+\epsilon+1}\right\}  .
\]
Recall that from formula $\left(  \ref{gamma-ratio}\right)  $ we have
$\gamma_{2}/\gamma>1,$ then by using the mean value theorem and Pooter's
inequalities, we get $R_{n1}=o_{\mathbf{P}}\left(  x^{-\epsilon}\right)  .$
The second term $R_{n2}$ may be decomposed into%
\[
R_{n2}=\int_{x^{1/\gamma_{2}}}^{\infty}\left(  W\left(  \vartheta_{n}\left(
t;\theta\right)  \right)  -W\left(  t^{-\gamma_{2}/\gamma}\right)  \right)
dt+\int_{x^{1/\gamma_{2}}}^{\infty}W\left(  t^{-\gamma_{2}/\gamma}\right)
dt.
\]
From Proposition B.1.10 in \cite{deHF06}, with high probability,%
\begin{equation}
c_{n}\left(  t;\theta\right)  :=\left\vert \vartheta_{n}\left(  t;\theta
\right)  -t^{-\gamma_{2}/\gamma}\right\vert \leq\epsilon t^{-\gamma_{2}%
/\gamma-\epsilon},\text{ as }n\rightarrow\infty, \label{vn}%
\end{equation}
this means that $\sup_{x>1}\sup_{t>x^{1/\gamma_{2}}}c_{n}\left(
t;\theta\right)  \overset{\mathbf{P}}{\rightarrow}0,$ as $n\rightarrow\infty.$
This implies by using Levy's modulus of continuity of the Wiener process
\citep[see, e.g., Theorem 1.1.1 in ][]{CR81}, that%
\[
\left\vert W\left(  \vartheta_{n}\left(  t;\theta\right)  \right)  -W\left(
t^{-\gamma_{2}/\gamma}\right)  \right\vert \leq2\sqrt{c_{n}\left(
t;\theta\right)  \log\left(  1/c_{n}\left(  t;\theta\right)  \right)  },
\]
with high probability. By using the fact that $\log s<\epsilon s^{-\epsilon},$
for $s\downarrow0$ together with inequality $\left(  \ref{vn}\right)  ,$ we
show that
\[
\left\vert W\left(  \vartheta_{n}\left(  t;\theta\right)  \right)  -W\left(
t^{-\gamma_{2}/\gamma}\right)  \right\vert <2\epsilon t^{-\left(  \gamma
_{2}/\gamma-\epsilon\right)  /2},
\]
uniformly on $t>x^{1/\gamma_{2}},$ it follows that%
\[
\left\vert \int_{x^{1/\gamma_{2}}}^{\infty}\left(  W\left(  \vartheta
_{n}\left(  t;\theta\right)  \right)  -W\left(  t^{-\gamma_{2}/\gamma}\right)
\right)  dt\right\vert =o_{\mathbf{P}}\left(  1\right)  \left\vert
\int_{x^{1/\gamma_{2}}}^{\infty}t^{-\left(  \gamma_{2}/\gamma-\epsilon\right)
/2}dt\right\vert .
\]
Recall that the assumption $\gamma_{1}<\gamma_{2}$ together with the equation
$1/\gamma=1/\gamma_{1}+1/\gamma_{2},$ imply that $\gamma_{2}/\left(
2\gamma\right)  >1,$ thus $-\left(  \gamma_{2}/\gamma-\epsilon\right)
/2+1<0,$ therefore $\left\vert \int_{x^{1/\gamma_{2}}}^{\infty}t^{-\left(
\gamma_{2}/\gamma-\epsilon\right)  /2}dt\right\vert =o_{\mathbf{P}}\left(
x^{-1/\gamma_{1}-\epsilon}\right)  .$ Then we showed that
\[
R_{n1}=o_{\mathbf{P}}\left(  x^{-\epsilon}\right)  \text{ and }R_{n2}%
=\int_{x^{1/\gamma_{2}}}^{\infty}W\left(  t^{-\gamma_{2}/\gamma}\right)
dt+o_{\mathbf{P}}\left(  x^{-1/\gamma_{1}-\epsilon}\right)  ,
\]
hence
\[
\sqrt{k}\mathbf{M}_{n2}^{\left(  1\right)  }\left(  x\right)  =R_{n}%
+S_{n}=\int_{x^{1/\gamma_{2}}}^{\infty}W\left(  t^{-\gamma_{2}/\gamma}\right)
dt+o_{\mathbf{P}}\left(  x^{-1/\gamma_{1}-\epsilon}\right)  +o_{\mathbf{P}%
}\left(  x^{\frac{1}{2}\left(  \frac{1}{\gamma_{2}}-\frac{1}{\gamma_{1}%
}\right)  +\epsilon}\right)  .
\]
It is clear that%
\[
\left(  -\frac{1}{\gamma_{1}}-\epsilon\right)  -\left(  \frac{1}{2}\left(
\frac{1}{\gamma_{2}}-\frac{1}{\gamma_{1}}\right)  +\epsilon\right)
=-\frac{\gamma_{1}+\gamma_{2}+4\epsilon\gamma_{1}\gamma_{2}}{2\gamma_{1}%
\gamma_{2}}<0.
\]
then%
\[
\sqrt{k}\mathbf{M}_{n2}^{\left(  1\right)  }=\int_{x^{1/\gamma_{2}}}^{\infty
}W\left(  t^{-\gamma_{2}/\gamma}\right)  dt+o_{\mathbf{P}}\left(  x^{\frac
{1}{2}\left(  \frac{1}{\gamma_{2}}-\frac{1}{\gamma_{1}}\right)  +\epsilon
}\right)  .
\]
By using similar arguments we end up with%
\[
\sqrt{k}\mathbf{M}_{n2}^{\left(  2\right)  }\left(  x\right)  =x^{1/\gamma
_{2}}W\left(  t^{-1/\gamma}\right)  +o_{\mathbf{P}}\left(  x^{-\frac{1}%
{\gamma_{1}}+\epsilon}\right)  ,
\]
therefore we omit further details.%
\[
\sqrt{k}\mathbf{M}_{n2}=\frac{\gamma}{\gamma_{1}}x^{1/\gamma_{2}}W\left(
t^{-1/\gamma}\right)  +\frac{\gamma}{\gamma_{1}}\int_{x^{1/\gamma_{2}}%
}^{\infty}W\left(  t^{-\gamma_{2}/\gamma}\right)  dt+o_{\mathbf{P}}\left(
x^{\frac{1}{2}\left(  \frac{1}{\gamma_{2}}-\frac{1}{\gamma_{1}}\right)
+\epsilon}\right)  .
\]
Let us now focus of the term $\mathbf{M}_{n3}.$ By letting $x=1$ in the
previous weak approximation we infer that
\begin{equation}
\sqrt{k}\frac{\overline{\mathbf{F}}_{n}\left(  X_{n-k:n};\theta\right)
-\overline{\mathbf{F}}\left(  X_{n-k:n};\theta\right)  }{\overline{\mathbf{F}%
}\left(  X_{n-k:n};\theta\right)  }=\frac{\gamma}{\gamma_{1}}W\left(
1\right)  +\frac{\gamma}{\gamma_{1}}\int_{1}^{\infty}W\left(  t^{-\gamma
_{2}/\gamma}\right)  dt+o_{\mathbf{P}}\left(  1\right)  . \label{w1}%
\end{equation}
which implies that%
\[
\sqrt{k}\frac{\overline{\mathbf{F}}_{n}\left(  X_{n-k:n};\theta\right)
-\overline{\mathbf{F}}\left(  X_{n-k:n};\theta\right)  }{\overline{\mathbf{F}%
}\left(  X_{n-k:n};\theta\right)  }=O_{\mathbf{P}}\left(  1\right)  .
\]
In other terms, we have%
\begin{equation}
\frac{\overline{\mathbf{F}}_{n}\left(  X_{n-k:n};\theta\right)  }%
{\overline{\mathbf{F}}\left(  X_{n-k:n};\theta\right)  }=1+O_{\mathbf{P}%
}\left(  k^{-1/2}\right)  . \label{app}%
\end{equation}
The regular variation of $\overline{\mathbf{F}}\left(  \cdot;\theta\right)  $
and $\left(  \ref{app}\right)  $ together imply that%
\begin{equation}
\frac{\overline{\mathbf{F}}\left(  xX_{n-k:n};\theta\right)  }{\overline
{\mathbf{F}}_{n}\left(  X_{n-k:n};\theta\right)  }=x^{-1/\gamma_{1}%
}+o_{\mathbf{P}}\left(  x^{-1/\gamma_{1}+\epsilon}\right)  . \label{app1}%
\end{equation}
By combining the results $\left(  \ref{w1}\right)  $ and $\left(
\ref{app1}\right)  $ we get
\[
\sqrt{k}\mathbf{M}_{n3}\left(  x\right)  =-x^{-1/\gamma_{2}}\left(
\frac{\gamma}{\gamma_{1}}W\left(  1\right)  +\frac{\gamma}{\gamma_{1}}\int
_{1}^{\infty}W\left(  t^{-\gamma_{2}/\gamma}\right)  dt\right)  +o_{\mathbf{P}%
}\left(  x^{-1/\gamma_{1}+\epsilon}\right)  .
\]
For the forth term we write
\[
\sqrt{k}\mathbf{M}_{n4}\left(  x\right)  =\left(  \frac{\overline{\mathbf{F}%
}\left(  xX_{n-k:n};\theta\right)  }{\overline{\mathbf{F}}_{n}\left(
X_{n-k:n};\theta\right)  }-x^{-1/\gamma_{1}}\right)  \left(  \sqrt{k}%
\frac{\overline{\mathbf{F}}_{n}\left(  xX_{n-k:n};\theta\right)
-\overline{\mathbf{F}}\left(  xX_{n-k:n};\theta\right)  }{\overline
{\mathbf{F}}\left(  xX_{n-k:n};\theta\right)  }\right)  .
\]
From $\left(  \ref{app1}\right)  $ the first factor of the previous equation
equals\textbf{\ }$o_{\mathbf{P}}\left(  x^{-1/\gamma_{1}+\epsilon}\right)  .$
On the other hand by using the change of variables $s=t^{-\gamma_{2}/\gamma},$
yields%
\[
\int_{x^{1/\gamma_{2}}}^{\infty}W\left(  t^{-\gamma_{2}/\gamma}\right)
dt=\frac{\gamma}{\gamma_{2}}\int_{0}^{x^{-1/\gamma}}s^{-\gamma/\gamma_{2}%
-1}W\left(  s\right)  ds.
\]
Since $\sup_{0<s<1}s^{-1/2+\epsilon}\left\vert W\left(  s\right)  \right\vert
=O_{\mathbf{P}}\left(  1\right)  ,$ then we easily show that
\[
\int_{x^{1/\gamma_{2}}}^{\infty}W\left(  t^{-\gamma_{2}/\gamma}\right)
dt=O_{\mathbf{P}}\left(  x^{\frac{1}{2}\left(  \frac{1}{\gamma_{2}}-\frac
{1}{\gamma_{1}}\right)  +\epsilon}\right)  ,
\]
it follows that $\sqrt{k}\mathbf{M}_{n2}=O_{\mathbf{P}}\left(  x^{\frac{1}%
{2}\left(  \frac{1}{\gamma_{2}}-\frac{1}{\gamma_{1}}\right)  +\epsilon
}\right)  $ as well. Therefore
\[
\sqrt{k}\frac{\overline{\mathbf{F}}_{n}\left(  xX_{n-k:n};\theta\right)
-\overline{\mathbf{F}}\left(  xX_{n-k:n};\theta\right)  }{\overline
{\mathbf{F}}\left(  xX_{n-k:n};\theta\right)  }=x^{1/\gamma_{1}}O_{\mathbf{P}%
}\left(  x^{\frac{1}{2}\left(  \frac{1}{\gamma_{2}}-\frac{1}{\gamma_{1}%
}\right)  +\epsilon}\right)  =O_{\mathbf{P}}\left(  x^{\frac{1}{2\gamma
}+\epsilon}\right)  .
\]
Hence we have
\[
\sqrt{k}\mathbf{M}_{n4}\left(  x\right)  =o_{\mathbf{P}}\left(  x^{-1/\gamma
_{1}+\epsilon}\right)  O_{\mathbf{P}}\left(  x^{\frac{1}{2\gamma}+\epsilon
}\right)  =o_{\mathbf{P}}\left(  x^{\frac{1}{2}\left(  \frac{1}{\gamma_{2}%
}-\frac{1}{\gamma_{1}}\right)  +\epsilon}\right)  .
\]
By assumption $\overline{\mathbf{F}}$ satisfies the second order condition of
regular variation function $\left(  \ref{second-order}\right)  ,$ this means
that for%
\begin{equation}
\lim_{t\rightarrow\infty}\dfrac{\overline{\mathbf{F}}\left(  tx\right)
/\overline{\mathbf{F}}\left(  t\right)  -x^{-1/\gamma_{1}}}{\mathbf{A}\left(
t\right)  }=x^{-1/\gamma_{1}}\dfrac{x^{\rho_{1}/\gamma_{1}}-1}{\rho_{1}%
\gamma_{1}}, \label{seconF}%
\end{equation}
for any $x>0,$ where $\rho_{1}<0$ is the second-order parameter and
$\mathbf{A}$ is $\mathcal{RV}\left(  \rho_{1}/\gamma_{1}\right)  .$ The
uniform inequality corresponding to $\left(  \ref{seconF}\right)  $ says:
there exist $t_{0}>0,$ such that for any $t>t_{0},$ we have
\[
\left\vert \dfrac{\overline{\mathbf{F}}\left(  tx\right)  /\overline
{\mathbf{F}}\left(  t\right)  -x^{-1/\gamma_{1}}}{\mathbf{A}\left(  t\right)
}-x^{-1/\gamma_{1}}\dfrac{x^{\rho_{1}/\gamma_{1}}-1}{\rho_{1}\gamma_{1}%
}\right\vert <\epsilon x^{-1/\gamma_{1}+\rho_{1}/\gamma_{1}+\epsilon},
\]
see for instance assertion (2.3.23) of Theorem 2.3.9 in \cite{deHF06}. \ It is
easy to check that the later inequality implies that
\begin{align*}
\sqrt{k}\mathbf{M}_{n5}\left(  x\right)   &  =\sqrt{k}\left(  \frac
{\overline{\mathbf{F}}\left(  xX_{n-k:n};\theta\right)  }{\overline
{\mathbf{F}}\left(  X_{n-k:n};\theta\right)  }-x^{-1/\gamma_{1}}\right) \\
&  =x^{-1/\gamma_{1}}\dfrac{x^{\rho_{1}/\gamma_{1}}-1}{\rho_{1}\gamma_{1}%
}\sqrt{k}\mathbf{A}\left(  X_{n-k:n}\right)  +o_{\mathbf{P}}\left(
x^{-1/\gamma_{1}}\dfrac{x^{\rho_{1}/\gamma_{1}}-1}{\rho_{1}\gamma_{1}}\sqrt
{k}\mathbf{A}\left(  X_{n-k:n}\right)  \right)  .
\end{align*}
Recall that $a_{k}:=F^{\ast\leftarrow}\left(  1-k/n\right)  $ and notice that
$X_{n-k:n}/a_{k}\overset{\mathbf{P}}{\rightarrow}1$ as $n\rightarrow\infty,$
then in view of the regular variation of $\mathbf{A}$ we infer that
$\mathbf{A}\left(  X_{n-k:n}\right)  =\left(  1+o_{\mathbf{P}}\left(
1\right)  \right)  \mathbf{A}\left(  a_{k}\right)  .$ On the other hand, by
assumption $\sqrt{k}\mathbf{A}\left(  a_{k}\right)  $ is asymptotically
bounded, therefore%
\[
\sqrt{k}\mathbf{M}_{n5}\left(  x\right)  =x^{-1/\gamma_{1}}\dfrac{x^{\rho
_{1}/\gamma_{1}}-1}{\rho_{1}\gamma_{1}}\sqrt{k}\mathbf{A}\left(  a_{k}\right)
+o_{\mathbf{P}}\left(  x^{-1/\gamma_{1}}\right)  .
\]
To summarize, as this stage we showed that%
\begin{align*}
\mathbf{D}_{n}\left(  x;\widehat{\theta}\right)   &  =\frac{\gamma}{\gamma
_{1}}x^{1/\gamma_{2}}W\left(  t^{-1/\gamma}\right)  +\frac{\gamma}{\gamma_{1}%
}\int_{x^{1/\gamma_{2}}}^{\infty}W\left(  t^{-\gamma_{2}/\gamma}\right)  dt\\
&  \ \ \ \ \ -x^{-1/\gamma_{2}}\left(  \frac{\gamma}{\gamma_{1}}W\left(
1\right)  +\frac{\gamma}{\gamma_{1}}\int_{1}^{\infty}W\left(  t^{-\gamma
_{2}/\gamma}\right)  dt\right) \\
&  \ \ \ \ \ \ \ \ \ \ \ \ \ \ \ \ +x^{-1/\gamma_{1}}\dfrac{x^{\rho_{1}%
/\gamma_{1}}-1}{\rho_{1}\gamma_{1}}\sqrt{k}\mathbf{A}\left(  a_{k}\right)
+\mathcal{\varsigma}\left(  x\right)  ,
\end{align*}
where $\mathcal{\varsigma}\left(  x\right)  :=o_{\mathbf{P}}\left(
x^{-1/\gamma_{1}+\epsilon}\right)  +o_{\mathbf{P}}\left(  x^{-1/\gamma_{1}%
}\right)  +o_{\mathbf{P}}\left(  x^{\frac{1}{2}\left(  \frac{1}{\gamma_{2}%
}-\frac{1}{\gamma_{1}}\right)  +\epsilon}\right)  .$ By using a change of
variables, we show that sum of the first three terms equals the Gaussian
process $\Gamma\left(  x;W\right)  $ stated in Theorem $\ref{Theorem1}$.
Recall that $\gamma_{1}<\gamma_{2}$ and
\[
\frac{1}{2}\left(  \frac{1}{\gamma_{2}}-\frac{1}{\gamma_{1}}\right)
+\epsilon<0,
\]
then it is easy to verify that $\mathcal{\varsigma}\left(  x\right)
=o_{\mathbf{P}}\left(  x^{\frac{1}{2}\left(  \frac{1}{\gamma_{2}}-\frac
{1}{\gamma_{1}}\right)  +\epsilon}\right)  .$ It follows that%
\begin{align*}
&  x^{\epsilon}\left\{  \mathbf{D}_{n}\left(  x;\widehat{\theta}\right)
-\Gamma\left(  x;W\right)  -x^{-1/\gamma_{1}}\dfrac{x^{\rho_{1}/\gamma_{1}}%
-1}{\rho_{1}\gamma_{1}}\sqrt{k}\mathbf{A}\left(  a_{k}\right)  \right\} \\
&  =o_{\mathbf{P}}\left(  x^{\frac{1}{2}\left(  \frac{1}{\gamma_{2}}-\frac
{1}{\gamma_{1}}\right)  +2\epsilon}\right)  =o_{\mathbf{P}}\left(  1\right)  ,
\end{align*}
uniformly on $x>1,$ therefore%
\[
\sup_{x>1}x^{\epsilon}\left\vert \mathbf{D}_{n}\left(  x;\widehat{\theta
}\right)  -\Gamma\left(  x;W\right)  -x^{-1/\gamma_{1}}\dfrac{x^{\rho
_{1}/\gamma_{1}}-1}{\rho_{1}\gamma_{1}}\sqrt{k}\mathbf{A}\left(  a_{k}\right)
\right\vert =o_{\mathbf{P}}\left(  1\right)  ,
\]
for any samll $0<\epsilon<1/2,$ which completes the proof of Theorem
$\ref{Theorem1}$.

\subsection{Proof of Theorem \ref{Theorem2}.}

From the representation $\left(  \ref{rep}\right)  $ we write%
\[
\widehat{\gamma}_{1}-\gamma_{1}=T_{n1}+T_{n2}+T_{n3},
\]
where%
\[
T_{n1}:=k^{-1/2}\int_{1}^{\infty}x^{-1}\left\{  \mathbf{D}_{n}\left(
x;\widehat{\theta};\gamma_{1}\right)  -\Gamma\left(  x;W\right)
-x^{-1/\gamma_{1}}\dfrac{x^{\rho_{1}/\gamma_{1}}-1}{\rho_{1}\gamma_{1}}%
\sqrt{k}\mathbf{A}\left(  a_{k}\right)  \right\}  dx
\]%
\[
T_{n2}:=k^{-1/2}\int_{1}^{\infty}x^{-1}\Gamma\left(  x;W\right)  dx
\]
and
\[
T_{n3}:=-\mathbf{A}\left(  a_{k}\right)  \int_{1}^{\infty}x^{-1/\gamma_{1}%
-1}\dfrac{x^{\rho_{1}/\gamma_{1}}-1}{\rho_{1}\gamma_{1}}dx.
\]
By using $\ref{Theorem1}$ yields $T_{n1}=o_{P}\left(  k^{-1/2}\right)
\int_{1}^{\infty}x^{-1+\epsilon}dx=o_{P}\left(  k^{-1/2}\right)  $.
Since\textbf{ }$\mathbf{E}\left\vert W\left(  s\right)  \right\vert \leq
s^{1/2},$ then it is easy to show that $\int_{1}^{\infty}x^{-1}\Gamma\left(
x;W\right)  dx=O_{\mathbf{P}}\left(  1\right)  ,$ it follows that
$T_{n2}=O_{\mathbf{P}}\left(  k^{-1/2}\right)  .$ By using an elementary
integration, we get $T_{n3}:=\frac{\mathbf{A}\left(  a_{k}\right)  }%
{1-\rho_{1}}.$ Since both $k^{-1/2}$ and $\mathbf{A}\left(  a_{k}\right)  $
tend to zero as $n\rightarrow\infty,$ then $T_{n3}=o\left(  1\right)  ,$ it
follows that $\widehat{\gamma}_{1}\overset{\mathbf{P}}{\rightarrow}\gamma
_{1},$ which gives the first result of Theorem. To establish the asymptotic
normality%
\[
\sqrt{k}\left(  \widehat{\gamma}_{1}-\gamma_{1}\right)  =\sqrt{k}T_{n1}%
+\sqrt{k}T_{n2}+\sqrt{k}T_{n3},
\]
where
\[
\sqrt{k}T_{n1}=o_{\mathbf{P}}\left(  1\right)  ,\sqrt{k}T_{n2}=\int
_{1}^{\infty}x^{-1}\Gamma\left(  x;W\right)  dx
\]
and
\[
\sqrt{k}T_{n2}=\frac{\sqrt{k}\mathbf{A}\left(  a_{k}\right)  }{1-\rho_{1}}.
\]
Note that $\Gamma\left(  x;W\right)  $ is a centred Gaussian process and by
using the assumption $\sqrt{k}\mathbf{A}\left(  a_{k}\right)  \rightarrow
\lambda<\infty,$ we end up with%
\[
\sqrt{k}\left(  \widehat{\gamma}_{1}-\gamma_{1}\right)  \overset{\mathcal{D}%
}{\rightarrow}\mathcal{N}\left(  \frac{\lambda}{1-\rho_{1}},\mathbf{E}\left[
\int_{1}^{\infty}x^{-1}\Gamma\left(  x;W\right)  dx\right]  ^{2}\right)  .
\]
By using elementary calculation we show that $\mathbf{E}\left[  \int
_{1}^{\infty}x^{-1}\Gamma\left(  x;W\right)  dx\right]  ^{2}=\sigma^{2},$ that
we omit the details.

\section{\textbf{Conclusion}}

\noindent In basis on a semiparametric estimator of the underlying
distribution function, we proposed a new estimation method to the tail index
of Pareto-type distributions for randomly right-truncated data. Compared with
the existing ones, this estimator behaves well both in terms of bias and rmse.
A useful weak approximation of the corresponding tail empirical process
allowed us to establish both the consistency and asymptotic normality of the
proposed estimator.

\section{\textbf{Appendix}}

\begin{lemma}
\label{Lemma1}For any small $\epsilon>0,$ we have%
\[
\frac{\overline{F}_{n}^{\ast}\left(  X_{n-k:n}w\right)  }{\overline{F}^{\ast
}\left(  X_{n-k:n}\right)  }=O_{\mathbf{P}}\left(  w^{-1/\gamma+\epsilon
/2}\right)  ,\text{ uniformly on }w\geq1.
\]

\end{lemma}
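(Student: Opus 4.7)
The strategy is to split the ratio into a purely analytic piece, controlled by Potter's bounds on the regularly varying function $\overline{F}^{\ast}$, and a stochastic piece, controlled by a maximal inequality for the uniform tail empirical distribution function $\mathbf{U}_{n}$ introduced in (\ref{tail}). The link between the two is the identity $(n/k)\overline{F}_{n}^{\ast}(x)=\mathbf{U}_{n}\bigl((n/k)\overline{F}^{\ast}(x)\bigr)$, which follows by setting $\xi_{i}=\overline{F}^{\ast}(X_{i})$, uniformly distributed on $(0,1)$.

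\textbf{Step 1.} Write
\[
\frac{\overline{F}_{n}^{\ast}(X_{n-k:n}w)}{\overline{F}^{\ast}(X_{n-k:n})}
=\frac{(n/k)\overline{F}_{n}^{\ast}(X_{n-k:n}w)}{(n/k)\overline{F}^{\ast}(X_{n-k:n})}.
\]
By Smirnov's lemma, already invoked in the proof of Theorem \ref{Theorem1}, $(n/k)\overline{F}^{\ast}(X_{n-k:n})\overset{\mathbf{P}}{\rightarrow}1$, so the denominator is $1+o_{\mathbf{P}}(1)$.

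\textbf{Step 2.} Using the identity above, the numerator equals $\mathbf{U}_{n}(s_{n}(w))$, where
\[
s_{n}(w):=\frac{n}{k}\overline{F}^{\ast}(X_{n-k:n}w)
=\frac{n}{k}\overline{F}^{\ast}(X_{n-k:n})\cdot\frac{\overline{F}^{\ast}(X_{n-k:n}w)}{\overline{F}^{\ast}(X_{n-k:n})}.
\]
Since $\overline{F}^{\ast}\in\mathcal{RV}(-1/\gamma)$ and $X_{n-k:n}\overset{\mathbf{P}}{\rightarrow}\infty$, Potter's inequalities (\ref{pooter}) yield, with probability tending to one,
\[
s_{n}(w)\le (1+o_{\mathbf{P}}(1))(1+\epsilon)\,w^{-1/\gamma+\epsilon/2},
\qquad\text{uniformly on } w\ge 1.
\]
In particular $s_{n}(w)$ is asymptotically bounded by some constant $C$ uniformly in $w\ge 1$.

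\textbf{Step 3.} Apply a classical maximal inequality for the uniform tail empirical process, namely the Daniels/Wellner-type bound $\mathbf{P}\bigl(\sup_{0<s\le 1}\mathbf{U}_{n}(s)/s\ge \lambda\bigr)\le 1/\lambda$ (see, e.g., Shorack and Wellner, 1986, inequality 10.3.1, together with the rescaling $s\mapsto ks/n$). This gives $\sup_{0<s\le C}\mathbf{U}_{n}(s)/s=O_{\mathbf{P}}(1)$, hence
\[
\mathbf{U}_{n}(s_{n}(w))=O_{\mathbf{P}}(s_{n}(w))=O_{\mathbf{P}}\bigl(w^{-1/\gamma+\epsilon/2}\bigr),
\]
uniformly on $w\ge 1$. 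Combining with Step 1 yields the claim.

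\textbf{Main obstacle.} The delicate point is the uniformity in $w\ge 1$ of the bound on $\mathbf{U}_{n}(s_{n}(w))$: the argument $s_{n}(w)$ is a random function of $w$ that may come arbitrarily close to $0$ for large $w$, so a pointwise comparison with a Wiener limit is not enough. The Daniels/Wellner maximal inequality is used precisely to handle this, and the monotonicity of $\mathbf{U}_{n}$ together with the monotonicity of $w\mapsto s_{n}(w)$ ensures we can bound the supremum over $w\ge 1$ by a supremum over $s\in(0,C]$ without additional conditioning.
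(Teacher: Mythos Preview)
Your proof is correct and follows essentially the same route as the paper: both arguments reduce to the uniform empirical process via the probability integral transform $\xi_i=\overline{F}^{\ast}(X_i)$, invoke a Shorack--Wellner/Daniels maximal inequality to get $\overline{F}_n^{\ast}/\overline{F}^{\ast}=O_{\mathbf{P}}(1)$ uniformly, and then apply Potter's bounds to $\overline{F}^{\ast}(X_{n-k:n}w)/\overline{F}^{\ast}(X_{n-k:n})$. The paper's decomposition is the slightly more direct product $\dfrac{\overline{F}_n^{\ast}(X_{n-k:n}w)}{\overline{F}^{\ast}(X_{n-k:n}w)}\cdot\dfrac{\overline{F}^{\ast}(X_{n-k:n}w)}{\overline{F}^{\ast}(X_{n-k:n})}$, which avoids the separate Smirnov step, but the content is the same.
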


\begin{proof}
Let $V_{n}\left(  t\right)  :=n^{-1}\sum_{i=1}^{n}\mathbf{1}\left(  \xi
_{i}\leq t\right)  $ be the uniform empirical df pertaining to the sample
$\xi_{i}:=\overline{F}^{\ast}\left(  X_{i}\right)  ,$ $i=1,...,n,$ of iid
uniform$\left(  0,1\right)  $ rv's. It is clear that, for an arbitrary $x,$ we
have $V_{n}\left(  \overline{F}^{\ast}\left(  x\right)  \right)  =\overline
{F}_{n}^{\ast}\left(  x\right)  $ almost surely. From Assertion 7 in
\cite{SW86} (page 415), $V_{n}\left(  t\right)  /t=O_{\mathbf{P}}\left(
1\right)  $ uniformly on $1/n\leq t\leq1,$ this implies that
\begin{equation}
\frac{\overline{F}_{n}^{\ast}\left(  X_{n-k:n}w\right)  }{\overline{F}^{\ast
}\left(  X_{n-k:n}w\right)  }=O_{\mathbf{P}}\left(  1\right)  ,\text{
uniformly on }w\geq1. \label{a}%
\end{equation}
On the other hand, by applying Potter's inequalities $\left(  \ref{pooter}%
\right)  $ to $\overline{F}^{\ast},$ we get
\begin{equation}
\frac{\overline{F}^{\ast}\left(  X_{n-k:n}w\right)  }{\overline{F}^{\ast
}\left(  X_{n-k:n}\right)  }=O_{\mathbf{P}}\left(  w^{-1/\gamma+\epsilon
/2}\right)  ,\text{ uniformly on }w\geq1. \label{b}%
\end{equation}
Combining the two statements $\left(  \ref{a}\right)  $ and $\left(
\ref{b}\right)  $ gives the desired results.
\end{proof}

\end{document}